\title{The true complexity of a system of linear equations}
\author{W.T. Gowers}
\address{Department of Pure Mathematics and Mathematical Statistics, Wilberforce Road, Cambridge CB3 0WB, UK.}
\email{W.T.Gowers@dpmms.cam.ac.uk}
\author{J. Wolf}
\address{Department of Pure Mathematics and Mathematical Statistics, Wilberforce Road, Cambridge CB3 0WB, UK.}
\email{J.Wolf@dpmms.cam.ac.uk}
\date{1st November 2007}
\newtheorem{theorem}{Theorem}[section]
\newtheorem{lemma}[theorem]{Lemma}
\newtheorem{corollary}[theorem]{Corollary}
\newtheorem{conjecture}[theorem]{Conjecture}
\newtheorem{definition}[theorem]{Definition}
\def\eps{\epsilon}
\def\E{\mathbb{E}}
\def\Z{\mathbb{Z}}
\def\R{\mathbb{R}}
\def\C{\mathbb{C}}
\def\N{\mathbb{N}}
\def\F{\mathbb{F}}
\def\a{\mathbf{a}}
\def\b{\mathbf{b}}
\def\n{\mathbf{n}}
\def\w{\mathbf{w}}
\def\x{\mathbf{x}}
\def\y{\mathbf{y}}
\def\oomega{\boldsymbol{\omega}}
\def\Lsys{\mathcal{L}}
\def\bone{\mathcal{B}_1}
\def\btwo{\mathcal{B}_2}
\def\B{\mathcal{B}}
\def\tends{\rightarrow}
\begin{document}

\maketitle

\begin{abstract} In this paper we look for conditions that are sufficient
to guarantee that a subset $A$ of a finite Abelian group $G$ contains the
``expected'' number of linear configurations of a given type. The simplest 
non-trivial result of this kind is the well-known fact that
if $G$ has odd order, $A$ has density $\alpha$ and all Fourier coefficients 
of the characteristic function of $A$ are significantly smaller than 
$\alpha$ (except the one at zero, which equals $\alpha$), then $A$ contains 
approximately $\alpha^3|G|^2$ triples of the form $(a,a+d,a+2d)$. This is
``expected'' in the sense that a random set $A$ of density $\alpha$ has
approximately $\alpha^3|G|^2$ such triples with very high probability.

More generally, it was shown in \cite{gowers:SzT} (in the case $G=\Z_N$
for $N$ prime, but the proof generalizes) that a set $A$ of
density $\alpha$ has about $\alpha^k|G|^2$ arithmetic progressions of length
$k$ if the characteristic function of $A$ is almost as small as it can
be, given its density, in a norm that is now called the
$U^{k-1}$-norm.  Green and Tao \cite{greentao:linprimes} have found the
most general statement that follows from the technique used to prove this
result, introducing a notion that they call the \emph{complexity}
of a system of linear forms. They prove that if $A$ has almost minimal
$U^{k+1}$-norm then it has the expected number of linear configurations 
of a given type, provided that the associated complexity is at most
$k$. The main result of this paper is that the converse is not true:
in particular there are certain systems of complexity 2 that are 
controlled by the $U^2$-norm, whereas the result of Green and Tao 
requires the stronger hypothesis of $U^3$-control. 

We say that a system of $m$ linear forms $L_1,\dots,L_m$ in $d$
variables has \emph{true complexity} $k$ if $k$ is the smallest
positive integer such that, for any set $A$ of density $\alpha$ and
almost minimal $U^{k+1}$-norm, the number of $d$-tuples
$(x_1,\dots,x_d)$ such that $L_i(x_1,\dots,x_d)\in A$ for every $i$
is approximately $\alpha^m|G|^d$. We conjecture that the true
complexity $k$ is the smallest positive integer $s$ for which
the functions $L_1^{s+1},\dots,L_m^{s+1}$ are linearly
independent. Using the ``quadratic Fourier analysis'' of Green and
Tao we prove this conjecture in the case where the complexity of
the system (in Green and Tao's sense) is 2, $s=1$ and $G$ is the
group $\F_p^n$ for some fixed odd prime $p$.
 
A closely related result in ergodic theory was recently proved 
independently by Leibman \cite{leibman:new}. We end the paper by
discussing the connections between his result and ours. 
\end{abstract}

\section{Introduction}

Let $A$ be a subset of a finite Abelian group $G$ and let
$\alpha=|A|/|G|$ be the \emph{density} of $A$. We say that $A$
is \emph{uniform} if it has one of several equivalent properties,
each of which says in its own way that $A$ ``behaves like a 
random set''. For example, writing $A$ for the characteristic
function of the set $A$, we can define the convolution $A*A$ by 
the formula
\[A*A(x)=\E_{y+z=x}A(y)A(z),\]
where the expectation is with respect to the uniform distribution
over all pairs $(y,z)\in G^2$ such that $y+z=x$; one of the 
properties in question is that the variance of $A*A$ 
should be small. If this is the case and $G$ has odd order, then it is easy 
to show that $A$ contains approximately $\alpha^3|G|^2$ triples of the 
form $(x,x+d,x+2d)$. Indeed, these triples are the solutions
$(x,y,z)$ of the equation $x+z=2y$, and
\[\E_{x+z=2y}A(x)A(y)A(z)=\E_yA*A(2y)A(y).\]
The mean of the function $A*A$ is $\alpha^2$, so if the variance
is sufficiently small, then the right-hand side is approximately
$\alpha^2\E_yA(y)=\alpha^3$. This is a probabilistic way of saying
that the number of solutions of $x+z=2y$ inside $A$ is approximately
$\alpha^3|G|^2$, which is what we would expect if $A$ was a random
set with elements chosen independently with probability~$\alpha$.

An easy generalization of the above argument shows that, given
any linear equation in $G$ of the form
\[c_1 x_1+ c_2 x_2+ \dots + c_m x_m =0,\] 
for suitable fixed coefficients $c_1, c_2, ..., c_m$, the
number of solutions in $A$ is approximately $\alpha^m|G|^{m-1}$.
Roughly speaking, you can choose $x_3,\dots,x_m$ in $A$ however 
you like, and if $A$ is sufficiently uniform then the number of 
ways of choosing $x_1$ and $x_2$ to lie in $A$ and satisfy the 
equation will almost always be roughly $\alpha^2|G|$. By ``suitable''
we mean that there are certain divisibility problems that must be
avoided. For example, if $G$ is the group $\F_2^n$, $x+z=2y$ 
and $x$ belongs to $A$, then $z$ belongs to $A$ for the
trivial reason that it equals $x$. Throughout this paper
we shall consider groups of the form $\F_p^n$ for some prime
$p$ and assume that $p$ is large enough for such problems not
to arise. 

When $k\geq 4$, uniformity of a set $A$ does not guarantee that $A$
contains approximately $\alpha^k|G|^2$ arithmetic progressions of
length $k$.  For instance, there are examples of uniform subsets of
$\Z_N$ that contain significantly more, or even significantly fewer
than, the expected number of four-term progressions
\cite{gowers:4APsEx}. It was established in \cite{gowers:SzT4} that
the appropriate measure for dealing with progressions of length 4 is a
property known as \emph{quadratic uniformity}: sets which are
sufficiently quadratically uniform contain roughly the correct number
of four-term progressions. We shall give precise definitions of
higher-degree uniformity in the next section, but for now let us
simply state the result, proved in \cite{gowers:SzT} in the case
$G=\Z_N$, that if $A$ is uniform of degree $k-2$, then $A$ contains
approximately $\alpha^k|G|^2$ arithmetic progressions of length
$k$. Moreover, if $A$ is uniform of degree $j$ for some $j<k-2$, 
then it does \emph{not} follow that $A$ must contain approximately 
$\alpha^k|G|^2$ arithmetic progressions of length $k$.

The discrepancy between $k$ and $k-2$ seems slightly unnatural until
one reformulates the statement in terms of solutions of equations. We
can define an arithmetic progression of length $k$ either as a
$k$-tuple of the form $(x,x+d,\dots,x+(k-1)d)$ or as a solution
$(x_1,x_2,\dots,x_k)$ to the system of $k-2$ equations
$x_i-2x_{i+1}+x_{i+2}=0$, $i=1,2,\dots,k-2$. In all the examples we
have so far discussed, we need uniformity of degree precisely $k$ in
order to guarantee approximately the expected number of solutions of a
system of $k$ equations. It is tempting to ask whether this is true in
general.

However, a moment's reflection shows that it is not. For example,
the system of equations $x_1-2x_2+x_3=0$, $x_4-2x_5+x_6=0$ has
about $\alpha^6|G|^4$ solutions in a uniform set, since the two
equations are completely independent. This shows that a sensible
conjecture must take account of how the equations interact with
each other.

A more interesting example is the system that consists of the
${m\choose 3}$ equations $x_{ij}+x_{jk}=x_{ik}$ in the 
${m\choose 2}$ unknowns $x_{ij}$, $1\leq i<j\leq m$. These
equations are not all independent, but one can of course choose an
independent subsystem of them. It is not hard to see that there is a
bijection between solutions of this system of equations where every
$x_{ij}$ belongs to $A$ and $m$-tuples $(x_1,\dots,x_m)$ such that
$x_j-x_i\in A$ whenever $i<j$. Now one can form a bipartite graph with
two vertex sets equal to $G$ by joining $x$ to $y$ if and only if
$y-x\in A$. It is well-known that if $A$ is uniform, then this
bipartite graph is quasirandom.  The statement that every $x_j-x_i$
belongs to $A$ can be reformulated to say that $(x_1,\dots,x_m)$ form
a clique in an $m$-partite graph that is built out of quasirandom
pieces derived from $A$. A ``counting lemma'' from the theory of
quasirandom graphs then implies easily that the number of such cliques
is approximately $\alpha^{m\choose 2}|G|^m$. So uniformity of degree 1 is
sufficient to guarantee that there are about the expected number of
solutions to this fairly complicated system of equations.

In their recent work on configurations in the primes, Green and Tao 
\cite{greentao:linprimes} analysed the arguments used to prove the above
results, which are fairly simple and based on repeated use of the
Cauchy-Schwarz inequality. They isolated the property that a system
of equations, or equivalently a system of linear forms, must have
in order for degree-$k$ uniformity to be sufficient for these
arguments to work, and called this property \emph{complexity}.
Since in this paper we shall have more than one notion of 
complexity, we shall sometimes call their notion 
\emph{Cauchy-Schwarz complexity}, or \emph{CS-complexity} for short.

\begin{definition}
Let $\Lsys=(L_1, ..., L_m)$ be a system of $m$ linear forms in $d$
variables. For $1\leq i \leq m$ and $s \geq 0$, we say that $\Lsys$ is
$s$-\emph{complex} at $i$ if one can partition the $m-1$ forms $\{L_j : j
\ne i\}$ into $s+1$ classes such that $L_i$ does not
lie in the linear span of any of these classes. The
\emph{Cauchy-Schwarz complexity} (or CS-complexity) of $\Lsys$ is
defined to be the least $s$ for which the system is $s$-complex at 
$i$ for all $1\leq i \leq m$, or $\infty$ if no such $s$ exists.
\end{definition}

To get a feel for this definition, let us calculate the complexity of
the system $\Lsys$ of $k$ linear forms $x,x+y,\dots,x+(k-1)y$. Any 
two distinct forms $x+iy$ and $x+jy$ in $\Lsys$ contain $x$ and $y$ in 
their linear span. Therefore, whichever form $L$ we take from $\Lsys$, 
if we wish to partition the others into classes that do not contain
$L$ in their linear span, then we must take these classes to be singletons.
Since we are partitioning $k-1$ forms, this tells us that the minimal
$s$ is $k-2$. So $\Lsys$ has complexity $k-2$.

Next, let us briefly look at the system $\Lsys$ of ${m\choose 2}$ forms 
$x_i-x_j$ ($1\leq i<j\leq m$) that we discussed above. If $L$ is
the form $x_i-x_j$ then no other form $L'\in\Lsys$ involves both
$x_i$ and $x_j$, so we can partition $\Lsys\setminus\{L\}$ into the 
forms that involve $x_i$ (which therefore do not involve $x_j$) and the 
forms that do not involve $x_i$. Since neither class includes $L$ in its
linear span, the complexity of $\Lsys$ is at most $1$. When $m\ge 3$ it 
can also be shown to be at least 1.

It follows from Green and Tao's result that if $A$ is sufficiently 
uniform and $\Lsys= (L_1, ..., L_m)$ has complexity at most 1, then 
$A$ contains approximately the expected number of $m$-tuples of the
form $(L_1(x_1, \dots, x_d),\dots,L_m(x_1, \dots x_d))$. (If the forms 
are defined over $\Z^d$, then this number is $\alpha^m|G|^d$.) 

Notice that this statement adequately explains all the cases we have
so far looked at in which uniformity implies the correct number of
solutions. It is thus quite natural to conjecture that Green and Tao's
result is tight. That is, one might guess that if the complexity
$\Lsys$ is greater than 1 then there exist sets $A$ that do not have
the correct number of images of $\Lsys$.

But is this correct? Let us look at what is known in the other 
direction, by discussing briefly the simplest example that shows that
uniform sets in $\Z_N$ do not have to contain the correct number
of arithmetic progressions of length 4. (Here we are taking $N$ to
be some large prime.) Roughly speaking, one takes $A$ to be the set 
of all $x$ such that $x^2$ mod $N$ is small. Then one makes use of 
the identity
\[x^2-3(x+d)^2+3(x+2d)^2-(x+3d)^2=0\]
to prove that if $x$, $x+d$ and $x+2d$ all lie in $A$, then $x+3d$
is rather likely to lie in $A$ as well, because $(x+3d)^2$ is a 
small linear combination of small elements of $\Z_N$. This means
that $A$ has ``too many'' progressions of length 4. (Later, we
shall generalize this example and make it more precise.)

The above argument uses the fact that the squares of the linear
forms $x$, $x+d$, $x+2d$ and $x+3d$ are linearly dependent. Later,
we shall show that if $\Lsys$ is \emph{any} system of linear forms
whose squares are linearly dependent, then essentially the same 
example works for $\Lsys$. This gives us a sort of ``upper bound''
for the set of systems $\Lsys$ that have approximately the right number
of images in any uniform set: because of the above example, we
know that the squares of the forms in any such system $\Lsys$ must be 
linearly independent. 

And now we arrive at the observation that motivated this paper: the
``upper bound'' just described does not coincide with the ``lower
bound'' of Green and Tao. That is, there are systems of linear forms
of complexity greater than 1 with squares that are linearly
independent. One of the simplest examples is the system
$(x,y,z,x+y+z,x+2y-z,x+2z-y)$. Another, which is
translation-invariant (in the sense that if you add a constant to
everything in the configuration, you obtain another configuration of
the same type), is $(x,x+y,x+z,x+y+z,x+y-z,x+z-y)$. A third and rather
natural example that is also translation-invariant is the configuration 
$$(x,x+a,x+b,x+c,x+a+b,x+a+c,x+b+c),$$
which can be thought of as a cube minus a point. All these examples 
have complexity 2, but it is not hard to produce
examples with arbitrarily high complexity.

In the light of such examples, we are faced with an obvious question:
which systems of linear forms have roughly the expected number of
images in any sufficiently uniform set? We conjecture that the
correct answer is given by the ``upper bound''---that is, that
square independence is not just necessary but also sufficient.
When the group $G$ is $\F_p^n$ for a fixed prime $p$, we prove 
this conjecture for systems of complexity 2. This includes 
the two examples above, and shows that having Cauchy-Schwarz
complexity at most 1 is not a necessary condition, even if it
is a natural sufficient one.

However, the proof is much deeper for systems of complexity 2.
Although the statement of our result is completely linear, we use
``quadratic Fourier analysis'', recently developed by Green and Tao
\cite{greentao:u3}, to prove it, and it seems that we are forced to do
so. Thus, it appears that Cauchy-Schwarz complexity captures the
systems for which an easy argument exists, while square independence
captures the systems for which the result is true.

Very recently, and independently, Leibman \cite{leibman:new} 
described a similar phenomenon in the ergodic-theoretic
context. In the final section of the paper we shall briefly 
outline how his results relate to ours. 

So far, we have concentrated on uniform sets. However, in the next
section we shall define higher-degree uniformity and formulate a more
complete conjecture, which generalizes the above discussion in a
straightforward way. Green and Tao proved that a system of
Cauchy-Schwarz complexity $k$ has approximately the correct number of
images in a set $A$ if $A$ is sufficiently uniform of degree
$k+1$. Once again, it seems that this is not the whole story, and that
the following stronger statement should be true: a linear system
$\Lsys=(L_1,\dots,L_m)$ has the right number of images in any set $A$
that is sufficiently uniform of degree $k$ if and only if the
functions $L_i^{k+1}$ are linearly independent.  The reason we have
not proved this is that the natural generalization of our existing
argument would have to use an as yet undeveloped general ``polynomial
Fourier analysis'', which is known only in the quadratic
case. However, it is easy to see how our arguments would generalize if such
techniques were available, which is compelling evidence that our conjecture 
(which we will state formally in a moment) is true.

\section{Uniformity norms and true complexity}

As promised, let us now give a precise definition of higher-degree 
uniformity. We begin by defining a sequence of norms, known as
\emph{uniformity norms}.

\begin{definition}
Let G be a finite Abelian group. For any positive integer $k \geq 2$
and any function $f: G \rightarrow \C$, define the \emph{$U^k$-norm}
by the formula
\[\|f\|_{U^k}^{2^k} := \E_{x,h_1, ..., h_k \in G} 
\prod_{\oomega \in \{0,1\}^k} C^{|\oomega|}f(x+\oomega\cdot \mathbf{h}),\]
where $\oomega\cdot\mathbf{h}$ is shorthand for $\sum_i\omega_ih_i$,
and $C^{|\oomega|}f=f$ if $\sum_i\omega_i$ is even and $\overline{f}$
otherwise.
\end{definition}

These norms were first defined in \cite{gowers:SzT} (in the case where
$G$ is the group $\Z_N$). Of particular interest in this paper will be
the $U^2$-norm and the $U^3$-norm. The former can be described in many
different ways.  The definition above expresses it as the fourth root
of the average of
\[f(x)\overline{f(x+h)}\overline{f(x+h')}f(x+h+h')\]
over all triples $(x,h,h')$. It is not hard to show that this
average is equal to $\|f*f\|_2^2$, and also to $\|\hat{f}\|_4^4$.
(These identities depend on appropriate normalizations---we follow
the most commonly used convention of taking averages in physical
space and sums in frequency space.)

We shall call a function $f$ $c$-\emph{uniform} if $\|f\|_{U^2}\le c$
and $c$-\emph{quadratically uniform} if $\|f\|_{U^3}\le c$. We shall
often speak more loosely and describe a function as uniform if it is 
$c$-uniform for some small $c$, and similarly for higher-degree uniformity.
We remark here that if $j\le k$ then $\|f\|_{U^j}\le\|f\|_{U^k}$, so
$c$-uniformity of degree $k$ implies $c$-uniformity of all lower degrees.

If $A$ is a subset of an Abelian group $G$ and the density of $A$ is
$\alpha$, then we say that $A$ is uniform of degree $k$ if it is close
in the $U^k$-norm to the constant function $\alpha$. More precisely,
we define the \emph{balanced function} $f(x)=A(x)-\alpha$ and say
that $A$ is $c$-\emph{uniform of degree} $k$ if $\|f\|_{U^k}\le c$.

The following theorem is essentially Theorem 3.2 in 
\cite{gowers:SzT}. (More precisely, in that paper the 
theorem was proved for the group $\Z_N$, but the proof 
is the same.)

\begin{theorem}\label{vn}
Let $k\geq 2$ and let $G$ be a finite Abelian group such that there
are no non-trivial solutions to the equation $jx=0$ for any $1\leq
j< k$. Let $c>0$ and let $f_1,f_2,\dots,f_k$ be functions from $G$
to $\C$ such that $\|f_i\|_\infty\leq 1$ for every $i$. Then
\[\Bigl|\E_{x,y\in G}f_1(x)f_2(x+y)\dots f_k(x+(k-1)y)\Bigr|
\leq\|f_k\|_{U^{k-1}}.\]
\end{theorem}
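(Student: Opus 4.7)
The plan is to apply the Cauchy--Schwarz inequality $k-1$ times, eliminating one function at each stage and doubling the number of terms, until only copies of $f_k$ remain, arranged precisely in the $U^{k-1}$ cube pattern.

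First I would write the left-hand side as $\Lambda := \E_{x,y}\prod_{i=1}^{k}f_i(x+(i-1)y)$ and, after the substitution $x\mapsto x-(i_0-1)y$ with $i_0=1$, recast it as $\Lambda=\E_x f_1(x)\,F_1(x)$ where $F_1(x):=\E_y\prod_{i=2}^{k}f_i(x+(i-1)y)$ (with the new variable absorbing the shift). Cauchy--Schwarz in $x$ together with $\|f_1\|_\infty\le 1$ then gives
\[|\Lambda|^{2}\le\E_x|F_1(x)|^{2}=\E_{x}\E_{y,y'}\prod_{i=2}^{k}f_i\bigl(x+(i-1)y\bigr)\overline{f_i\bigl(x+(i-1)y'\bigr)}.\]
Setting $h_1=y-y'$ and averaging out $y'$, this becomes an average of a $2$-fold product of pairs $f_i(u)\overline{f_i(u+(i-1)h_1)}$, i.e.\ a single "cube-difference" has been introduced.

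Next, inside this new expression I would isolate the $f_2$-factors (which are constant in $y'$ after the reparametrization), perform another substitution so that the $f_2$-arguments no longer involve the remaining averaging variable, and apply Cauchy--Schwarz again. This kills $f_2$ (again using $\|f_2\|_\infty\le 1$) and doubles the product, introducing a new shift parameter $h_2$. Iterating this procedure $k-1$ times, at the $j$-th step I eliminate $f_j$ and introduce a difference parameter $h_j$. After all $k-1$ steps I obtain
\[|\Lambda|^{2^{k-1}}\le \E_{x,h_1,\dots,h_{k-1}}\prod_{\oomega\in\{0,1\}^{k-1}}C^{|\oomega|}f_k\bigl(x+c(\oomega)\cdot\mathbf{h}\bigr),\]
where $c(\oomega)$ is a vector of integers depending on the combinatorics of the iteration. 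The key point is that each entry of $c(\oomega)$ is a non-zero integer less than $k$ in absolute value, so the hypothesis that $jx=0$ has only the trivial solution for $1\le j<k$ ensures that the linear change of variables $h_j\mapsto c_j h_j$ is a bijection on $G$. Applying this change of variables converts the right-hand side into precisely $\|f_k\|_{U^{k-1}}^{2^{k-1}}$, and taking $2^{k-1}$-th roots yields the theorem.

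The main obstacle is book-keeping: one must verify that at each Cauchy--Schwarz step the reparametrization genuinely removes the dependence of the eliminated function on the new averaging variable, and that the cumulative shifts applied to $f_k$ at the end span, after a linear bijection, the full $\{0,1\}^{k-1}$-cube pattern appearing in the definition of the $U^{k-1}$-norm. The conjugation pattern $C^{|\oomega|}$ is automatic, because each Cauchy--Schwarz step pairs a function with its complex conjugate, and the parity of the number of conjugations on a given corner of the cube matches $|\oomega|$. Once the bijectivity of the relevant integer multiplications on $G$ is in hand (which is exactly the stated divisibility hypothesis), the remainder of the argument is a routine bounding of inner factors by $1$ via $\|f_i\|_\infty\le 1$.
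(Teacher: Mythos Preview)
The paper does not actually prove this theorem: it is stated as ``essentially Theorem~3.2 in \cite{gowers:SzT}'' and then used as a black box. Your repeated Cauchy--Schwarz (van der Corput) argument is exactly the standard proof from that reference and is correct; the only cosmetic point to clean up is that the final shifts have the form $x+\sum_{j=1}^{k-1}\omega_j c_j h_j$ with \emph{fixed} integers $1\le c_j\le k-1$ independent of $\oomega$, so the substitutions $h_j\mapsto c_j h_j$ are bijections on $G$ precisely by the torsion hypothesis, yielding $\|f_k\|_{U^{k-1}}^{2^{k-1}}$ on the nose.
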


It follows easily from this result that if $A$ is a set of density
$\alpha$ and $A$ is $c$-uniform for sufficiently small $c$, then $A$
contains approximately $\alpha^k|G|^2$ arithmetic progressions of length
$k$. Very briefly, the reason for this is that we are trying to show that
the average
\[\E_{x,y}A(x)A(x+y)\dots A(x+(k-1)y)\]
is close to $\alpha^k$. Now this average is equal to
\[\E_{x,y}A(x)A(x+y)\dots f(x+(k-1)y)+
\alpha\E_{x,y}A(x)A(x+y)\dots A(x+(k-2)y).\]
The first of these terms is at most $c$, by Theorem \ref{vn}, and the
second can be handled inductively. The bound we obtain in this way
is $c(1+\alpha+\dots+\alpha^{k-1})\leq kc$.

We can now state formally Green and Tao's generalization in terms of
CS-complexity in the case where $G$ is the group $\Z_N$, which is
implicit in \cite{greentao:linprimes}.

\begin{theorem}\label{gvn}
Let $N$ be a prime, let $f_1,\dots,f_m$ be functions from 
$\Z_N$ to $[-1,1]$, 
and let $\Lsys$ be a linear system of CS-complexity $k$ consisting 
of $m$ forms in $d$ variables. Then, provided $N\geq k$,
\[ \Big|\E_{ x_1, ..., x_d \in \Z_N} \prod_{i=1}^m f(L_i(x_1, ..., x_d))\Big|
\leq \min_i\|f_i\|_{U^{k+1}}.\]
\end{theorem}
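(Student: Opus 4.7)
The plan is to establish the bound $|\E_\x \prod_i f_i(L_i(\x))| \le \|f_i\|_{U^{k+1}}$ separately for each fixed $i$; taking the minimum over $i$ then yields the theorem. By relabelling, fix $i=1$. Since $\Lsys$ has CS-complexity $k$, the set $\{L_j : j \ne 1\}$ admits a partition into $k+1$ classes $C_0,\ldots,C_k$ such that $L_1 \notin \mathrm{span}(C_s)$ for every $s$. The key linear-algebraic input is that, because $N$ is prime and $L_1 \notin \mathrm{span}(C_s)$, duality in the vector space $\Z_N^d$ furnishes a vector $v_s \in \Z_N^d$ with $L_j(v_s) = 0$ for all $j \in C_s$ and $\lambda_s := L_1(v_s) \ne 0$.

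The proof would then proceed by $k+1$ iterated Cauchy--Schwarz inequalities, one per class, with $v_s$ serving as the ``difference direction'' for round $s$. I would maintain the inductive invariant that after $s$ rounds the $2^s$-th power of the original average is bounded by a Gowers-type inner product in the variables $h_0,\ldots,h_{s-1}$, in which each remaining $f_j$ (for $j$ in $C_s\cup\cdots\cup C_k$ or equal to $1$) appears in $2^s$ copies, one per $\oomega \in \{0,1\}^s$, evaluated at $L_j\bigl(\x + \sum_{r<s}\omega_r h_r v_r\bigr)$. In round $s$, split this expression as $\E_\x F_s(\x)G_s(\x)$, where $F_s$ collects all factors indexed by $j \in C_s$ and $G_s$ collects the rest. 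Because $L_j(v_s) = 0$ for every $j \in C_s$, the function $F_s$ is invariant under $\x \mapsto \x + tv_s$; introducing an auxiliary average over $t \in \Z_N$ and applying Cauchy--Schwarz in $\x$ yields
\[\Bigl|\E_\x F_s(\x)G_s(\x)\Bigr|^2 = \Bigl|\E_{\x,t}F_s(\x)G_s(\x+tv_s)\Bigr|^2 \le \E_{\x,h_s}G_s(\x)\overline{G_s(\x+h_sv_s)},\]
where the bound $|F_s|^2\le 1$ discards the $C_s$-factors and the new difference variable $h_s$ arises as $t'-t$ after expanding $\E_\x|\E_t G_s(\x+tv_s)|^2 = \E_{\x,t,t'}G_s(\x+tv_s)\overline{G_s(\x+t'v_s)}$ and changing variables $\y=\x+tv_s$.

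After $k+1$ rounds all of the $f_j$ with $j\ne 1$ have been discarded, and $f_1$ appears as $2^{k+1}$ copies indexed by $\oomega \in \{0,1\}^{k+1}$, giving
\[\Bigl|\E_\x \prod_i f_i(L_i(\x))\Bigr|^{2^{k+1}} \le \E_{\x,h_0,\ldots,h_k}\prod_{\oomega\in\{0,1\}^{k+1}} C^{|\oomega|} f_1\Bigl(L_1(\x) + \sum_{s=0}^k \omega_s\lambda_s h_s\Bigr).\]
Since each $\lambda_s$ is invertible in the field $\Z_N$, substituting $h_s\mapsto\lambda_s^{-1}h_s$ is a bijection on $\Z_N$, and since $L_1$ is a nonzero linear form it is surjective onto $\Z_N$, so $u:=L_1(\x)$ is uniformly distributed. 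The right-hand side therefore equals $\|f_1\|_{U^{k+1}}^{2^{k+1}}$, and taking $(2^{k+1})$-th roots completes the argument.

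The main obstacle is the bookkeeping for the iterated Cauchy--Schwarz: one must formulate the inductive invariant precisely enough to guarantee that, at each round, the factor $F_s$ really is invariant in the $v_s$-direction and that the resulting bound remains a Gowers-type inner product with exactly one additional difference variable. The disjointness of the classes $C_s$ is essential here, since it prevents factors cancelled at one step from interfering with later rounds. Once this bookkeeping is in place, each individual round is a routine application of Cauchy--Schwarz, and the only genuinely geometric ingredient is the duality step producing the vectors $v_s$.
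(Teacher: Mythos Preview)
Your argument is correct, and it takes a genuinely different route from the paper's. The paper follows Green and Tao by first invoking a reduction to \emph{$s$-normal form}: one extends or reparametrizes the system so that each $L_i$ has a distinguishing set $\tau_i$ of at most $s+1$ coordinate variables, with $\tau_i\not\subset\sigma_j$ for $j\ne i$. Once in normal form (the paper gives the explicit argument only for $k=2$), the Cauchy--Schwarz steps are applied in the coordinate directions $x_1,x_2,x_3$, producing the octahedral norm of $h(x,y,z)=f(L_1(x,y,z))$, which collapses to $\|f\|_{U^3}$. By contrast, you work directly from the definition of CS-complexity: the duality step producing the vectors $v_s$ with $L_j(v_s)=0$ for $j\in C_s$ and $L_1(v_s)\ne 0$ replaces the normal-form reduction wholesale, and your Cauchy--Schwarz rounds use these $v_s$ as difference directions rather than coordinate axes. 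What this buys you is that the argument is self-contained (no black-box reduction to normal form) and uniform in $k$; what the paper's route buys is a clean ``hypergraph counting'' picture once the normal form is in place, since the bounded functions $b(x,y),b(y,z),b(x,z)$ visibly depend on only two of the three variables. Both approaches are standard; yours is closer in spirit to the original generalized von Neumann inequalities, the paper's to the hypergraph-regularity point of view.
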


Just as in the case of arithmetic progressions, it follows easily
that if $A$ is a subset of $G$ of density $\alpha$, then the 
probability, given a random element $(x_1, ..., x_d)\in G^d$,
that all the $m$ images $L_i(x_1, ..., x_d)$ lie in $A$ is 
approximately $\alpha^m$. (The inductive argument depends on 
the obvious fact that if $\Lsys$ has complexity at most $k$
then so does any subsystem of $\Lsys$.)

Green and Tao proved the above theorem because they were investigating
which linear configurations can be found in the primes. For that
purpose, they in fact needed a more sophisticated ``relative'' version
of the statement. Since the proof of the version we need here is
simpler (partly because we are discussing systems of complexity at
most 2, but much more because we do not need a relative version), 
we give it for the convenience of the reader. This is another result
where the proof is essentially the same for all Abelian groups, give
or take questions of small torsion. Since we need it in the case 
$G=\F_p^n$, we shall just prove it for this group. The reader should
bear in mind that for this group, one should understand linear 
independence of a system of forms as independence over $\F_p$ when
one is defining complexity (and also square-independence).

The first step of Green and Tao's proof was to put an arbitrary
linear system into a convenient form for proofs. Given a linear
form $L$ in $d$ variables $x_1,\dots,x_d$, let us define the
\emph{support} of $L$ to be the set of $j$ such that $L$ depends
on $x_j$. That is, if $L(x_1,\dots,x_d)=\lambda_1x_1+\dots+\lambda_dx_d$
then the support of $L$ is $\{i:\lambda_i\ne 0\}$. Let 
$\Lsys=(L_1,\dots,L_m)$ be a system of linear forms and let the
support of $L_i$ be $\sigma_i$ for each $i$. Then $\Lsys$ is said
to be in $s$-\emph{normal form} if it is possible to find subsets
$\tau_i\subset\sigma_i$ for each $i$ with the following two properties.

(i) Each $\tau_i$ has cardinality at most $s+1$.

(ii) If $i\ne j$ then $\tau_i$ is not a subset of $\sigma_j$.

If a linear system $\Lsys$ is in $s$-normal form, then it has 
complexity at most $s$. Indeed, if $\tau_i$ has $r$ elements
$\{i_1,\dots,i_r\}$, then one can partition the remaining forms
into $r$ sets $\Lsys_1,\dots,\Lsys_r$ in such a way that no form
in $\Lsys_h$ uses the variable $x_{i_h}$. Since $L_i$ \emph{does}
use the variable $x_{i_h}$ it is not in the linear span of $\Lsys_h$.

The converse of this statement is false, but Green and Tao prove that
every linear system of complexity $s$ can be ``extended'' to one that
is in $s$-normal form. This part of the proof is the same in both
contexts, so we do not reproduce it. All we need to know here is that
if we prove Theorem \ref{gvn} for systems in normal form then we have
it for general systems.

Just to illustrate this, consider the obvious system associated
with arithmetic progressions of length 4, namely $(x,x+y,x+2y,x+3y)$.
This is not in 2-normal form, because the support of the first form
is contained in the supports of the other three. However, the system
$(-3x-2y-z,-2x-y+w,-x+z+2w,y+2z+3w)$ \emph{is} in 2-normal form (since
the supports have size 3 and are distinct) and its images are also
uniformly distributed over all arithmetic progressions of length 4
(if we include degenerate ones).

Now let us prove Theorem \ref{gvn} when $k=2$. Without loss of generality 
we may assume that $\Lsys$ is in
$2$-normal form at $1$, and that it is the only form using all three
variables $x_1=x, x_2=y$ and $x_3=z$. We use the shorthand $h(x,y,z)=
f(L_1(x_1, x_2, ..., x_d))$, and denote by $b(x,y)$ any general bounded
function in two variables $x$ and $y$. It is then possible to rewrite
\[ \E_{x_1, ..., x_d \in \F_p^n} \prod_{i=1}^m f(L_i(x_1, ..., x_d))\]
as
\[\E_{x_4, x_5, ..., x_d} \E_{x,y,z} h(x,y,z)b(x,y)b(y,z)b(x,z).\]
Here, the functions $h$ and $b$ depend on the variables $x_4,\dots,x_d$
but we are suppressing this dependence in the notation.

Estimating the expectation over $(x,y,z)$ is a well-known argument
from the theory of quasirandom hypergraphs. (See for instance Theorem 
4.1 in \cite{gowers:HRL3}.) First, we apply Cauchy-Schwarz
and use the boundedness of $b$ to obtain an upper bound of
\[(\E_{x,y} (\E_z h(x,y,z) b(x,z) b(y,z))^2)^{1/2}.\]
Expanding out the square and rearranging yields
\[(\E_{y,z,z'}b(y,z)b(y,z') \E_x h(x,y,z)h(x,y,z') b(x,z) b(x,z'))^{1/2},\]
and by a second application of Cauchy-Schwarz we obtain an upper bound of
\[(\E_{y,z,z'}(\E_x h(x,y,z)h(x,y,z') b(x,z)b(x,z'))^2)^{1/4}.\]
A second round of interchanging summation followed by a third
application of Cauchy-Schwarz gives us an upper bound of
\[(\E_{x,x',z,z'} (\E_y h(x,y,z)h(x,y,z')h(x',y,z)h(x',y,z'))^2)^{1/8}.\]
This expression equals the ``octahedral norm'' of the function 
$h(x,y,z)$---a hypergraph analogue of the $U^3$-norm. Because 
for fixed $x_4,\dots,x_d$, $h$ depends only on the linear 
expression $L_1(x,y,z)$, a simple change of variables can be
used to show that it is in fact equal to $\|f\|_{U^3}$.

Now all that remains is to take the expectation over the remaining
variables and the proof is complete. It is also not hard to generalize
to arbitrary $k$, but this we leave as an exercise to the reader.

Now, as we stated earlier, Theorem \ref{gvn} does not settle the question
of which systems are controlled by which degrees of uniformity. 
Accordingly, we make the following definition.

\begin{definition}
Let $\Lsys$ be a system of $m$ distinct linear forms $L_1,L_2,\dots,L_m$
in $d$ variables. The \emph{true complexity} of $\Lsys$ is the smallest
$k$ with the following property. For every $\epsilon>0$ there exists
$\delta>0$ such that if $G$ is any finite Abelian group and $f:G\rightarrow\C$
is any function with $\|f\|_\infty\leq 1$ and $\|f\|_{U^{k+1}}\leq\delta$,
then
\[\Bigl|\E_{ x_1, ..., x_d \in G} \prod_{i=1}^m 
f(L_i(x_1, ..., x_d))\Bigr|\leq\epsilon.\]
\end{definition}

The main conjecture of this paper is now simple to state precisely.

\begin{conjecture}\label{mainconj}
The true complexity of a system of linear forms $\Lsys=(L_1,\dots,L_m)$
is equal to the smallest $k$ such that the functions $L_i^{k+1}$ are
linearly independent.
\end{conjecture}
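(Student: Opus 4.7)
The conjecture is an equality, so the plan is to prove the two bounds separately. Write $t$ for the true complexity of $\Lsys$ and $s$ for the smallest integer for which $L_1^{s+1},\dots,L_m^{s+1}$ are linearly independent; the goal is to establish $t\ge s$ and $t\le s$.

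For the lower bound $t\ge s$, I would exhibit a function whose $U^s$-norm tends to zero but whose multilinear average stays bounded below, generalising the quadratic Bohr-set example sketched in the introduction. Since $L_1^s,\dots,L_m^s$ are linearly dependent, fix scalars $c_1,\dots,c_m$, not all zero, with $\sum_i c_iL_i^s=0$. Take $f$ to be the balanced function of the level set $A=\{y\in G:P(y)=0\}$, where $P$ is a suitably generic homogeneous polynomial of degree $s$ on $G=\F_p^n$. Weil-type bounds for polynomial exponential sums give $\|f\|_{U^s}\to0$ as $n\to\infty$, while expanding $\prod_i 1_A(L_i(\mathbf{x}))$ by Fourier inversion on $\F_p$ and using the identity $\sum_i c_iP\circ L_i\equiv0$ (a consequence of the formal relation among the $L_i^s$ and the homogeneity of $P$) produces an extra contribution of order $p^{1-m}$ on top of the expected $\alpha^m=p^{-m}$, yielding a constant-factor bias.

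For the upper bound $t\le s$, the natural approach is a decomposition argument built on an inverse theorem for the $U^{s+1}$-norm. In the case $s=1$ treated in this paper, the relevant tool is Green--Tao's inverse theorem for the $U^3$-norm on $\F_p^n$, which says that any bounded $f$ with $\|f\|_{U^3}\ge\eta$ correlates with a quadratic phase $\chi(Q)$. By an energy-increment iteration one can then write $f=q+e$, where $q$ is a bounded combination of quadratic phases $\chi(Q_\ell)$ and $\|e\|_{U^3}<\delta$. Expanding $\E_{\mathbf{x}}\prod_i f(L_i(\mathbf{x}))$ multilinearly, every term involving a factor of $e$ is $O(\delta)$ by Theorem~\ref{gvn}, since $\Lsys$ has CS-complexity~$2$ and is therefore $U^3$-controlled. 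The surviving pure-quadratic terms are character sums of the form $\E_{\mathbf{x}}\chi\bigl(\sum_i Q_i(L_i(\mathbf{x}))\bigr)$; the hypothesis that $L_1^2,\dots,L_m^2$ are linearly independent translates, via the identification of quadratic forms with symmetric matrices, into non-degeneracy of $\sum_i Q_i\circ L_i$ as a quadratic form in $(\mathbf{x}_1,\dots,\mathbf{x}_d)$, so each such character sum is tiny.

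The hard part will be making this decomposition quantitative: the $U^3$ inverse theorem only guarantees local correlation with a quadratic phase, and upgrading this to a global decomposition whose error $e$ is uniformly small in $U^3$ while the pieces of $q$ stay bounded requires a delicate iteration whose losses must be threaded through the whole calculation. A separate obstacle, responsible for restricting the paper's result to $s=1$, is that an inverse theorem for the $U^{s+1}$-norm in terms of degree-$s$ polynomial phases on $\F_p^n$ is presently available only when $s=1$; granted the corresponding ``polynomial Fourier analysis'' for higher $s$, the same outline---decompose into a polynomial-phase part plus a $U^{s+1}$-small error, apply Theorem~\ref{gvn} to the error, and read off non-degeneracy of $\sum_i R_i\circ L_i$ from linear independence of $L_1^{s+1},\dots,L_m^{s+1}$---should carry through to yield the full conjecture.
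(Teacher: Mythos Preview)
Your overall strategy matches the paper's: use a quadratic structure theorem to decompose $f$, kill the $U^3$-small error via Theorem~\ref{gvn}, and handle the structured part by a Gauss-sum calculation exploiting square-independence. However, your sketch of the upper bound has a genuine gap: you never say where the hypothesis $\|f\|_{U^2}\le c$ enters. Your claim that ``square-independence of $L_1^2,\dots,L_m^2$ translates into non-degeneracy of $\sum_i Q_i\circ L_i$'' is false as stated---if some of the $Q_i$ happen to be linear (or zero), the combined form can degenerate completely, and such character sums are \emph{not} tiny. Indeed, a bounded combination of quadratic phases certainly contains linear and constant phases among its Fourier components, and the product $\prod_i q(L_i(\mathbf{x}))$ will therefore contain terms governed entirely by linear dependencies among the $L_i$, which square-independence says nothing about. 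The paper deals with exactly this issue in Lemma~\ref{bound1}: it further splits the structured piece $f_1=\E(f|\btwo)$ as $g+h$ with $g=\E(f_1|\bone)$ the projection onto the \emph{linear} factor, shows via Lemmas~\ref{younginverse} and~\ref{u2pyth} that $\|g\|_1\le p^{d_1/4}\|f\|_{U^2}$ (this is where $U^2$-smallness is spent), and only then runs the Gauss-sum argument on $h$, which by construction has mean zero on every linear atom.

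Two further points your outline glosses over. First, the non-degeneracy of the genuinely quadratic part requires not just square-independence but the \emph{high-rank} condition on the factor: one needs every nontrivial linear combination of the basic quadratics $q_1,\dots,q_{d_2}$ to have rank $\ge r$ (this is how Lemma~\ref{quadfactor} works). Second, obtaining a high-rank factor forces the three-term decomposition $f=f_1+f_2+f_3$ of Theorem~\ref{qd2}, with an extra $L^2$-small piece $f_2$; your two-term decomposition $f=q+e$ is not what the available structure theorem delivers. Your acknowledgement that ``the hard part will be making this decomposition quantitative'' is correct, but the specific missing ingredient---isolating and killing the linear-factor contribution using the $U^2$ hypothesis---is not a quantitative nicety; without it the argument does not go through.
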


In the next section, we shall prove this conjecture in the simplest
case that is not covered by the result of Green and Tao, namely the
case when $k=1$ and $\Lsys$ has CS-complexity 2. All other cases would
require a more advanced form of polynomial Fourier analysis than the
quadratic Fourier analysis that is so far known, but we shall explain
why it will almost certainly be possible to generalize our argument
once such a theory is developed.

\section{True complexity for vector spaces over finite fields}

We shall now follow the course that is strongly advocated by Green
\cite{green:finitemodels} and restrict attention to the case where
$G$ is the group $\F_p^n$, where $p$ is a fixed prime and $n$ tends
to infinity. The reason for this is that it makes many arguments
technically simpler than they are for groups with large torsion
such as $\Z_N$. In particular, one can avoid the technicalities
associated with Bohr sets. These arguments can then almost always
be converted into more complicated arguments for $\Z_N$. (In a
forthcoming paper, we give a different proof for the case
$\F_p^n$ and carry out the conversion process. That proof is
harder than the proof here but gives significantly
better bounds and is easier to convert.)

We begin this section with the easier half of our argument, showing
that if $\Lsys$ is a system of linear forms $(L_1,\dots,L_m)$ and
if there is a linear dependence between the squares of  
these forms, then the true complexity of $\Lsys$ is greater than 1.
This part can be proved almost as easily for $\Z_N$, but we shall
not do so here.

\subsection{Square-independence is necessary}

Let us start by briefly clarifying what we mean by square-independence
of a linear system $\Lsys=(L_1,\dots,L_m)$. When the group $G$ is $\Z_N$,
then all we mean is that the functions $L_i^2$ are linearly independent,
but when it is $\F_p^n$, then this definition does not make sense any
more. Instead, we ask for the quadratic forms $L_i^TL_i$ to be linearly
independent. If $L_i(x_1,\dots,x_d)=\sum_r\gamma_r^{(i)}x_r$, then 
$L_i^TL_i(x_1,\dots,x_d)=\sum_r\sum_s\gamma_r^{(i)}\gamma_s^{(i)}x_rx_s$.
Therefore, what we are 
interested in is linear independence of the matrices
$\Gamma_{rs}^{(i)}=\gamma_r^{(i)}\gamma_s^{(i)}$ over $\F_p$. (Note 
that in the case of $\Z_N$, this is equivalent to independence of the 
functions $L_i^2$.)

\begin{theorem}\label{badex}
Let $\Lsys=(L_1,\dots,L_m)$ be a system of linear forms in $d$ variables
and suppose
that the quadratic forms $L_i^TL_i$ are linearly dependent over $\F_p$.
Then there exists $\epsilon>0$ such that for every $\delta>0$ there exists
$n$ and a set $A\subset \F_p^n$ with the following two properties.

(i) $A$ is $\delta$-uniform of degree $1$.

(ii) If $\x=(x_1,\dots,x_d)$ is chosen randomly from $(\F_p^n)^d$, then
the probability that $L_i(\x)$ is in $A$ for every $i$ is at least
$\alpha^m+\epsilon$, where $\alpha$ is the density of $A$.

In other words, the true complexity of $\Lsys$ is at least 2.
\end{theorem}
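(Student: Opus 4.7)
The plan is to imitate the ``$x^2$ small'' construction sketched in the introduction, but carried out in $\F_p^n$ where completing the square gives honest Gauss-sum cancellation. Fix a non-degenerate symmetric $n \times n$ matrix $M$ over $\F_p$ (available since $p$ is odd), set $q(x) := x^T M x$, and take $A := \{x \in \F_p^n : q(x) = 0\}$; $n$ will be chosen large at the end. By hypothesis there is some $\mathbf{c} \in \F_p^m \setminus \{0\}$ with $\sum_i c_i L_i^T L_i = 0$ as a symmetric $d \times d$ matrix over $\F_p$, so the subspace $V := \{\mathbf{a} \in \F_p^m : \sum_i a_i L_i^T L_i = 0\}$ has dimension at least $1$ and hence cardinality at least $p$.

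The key calculation uses the identity $1_A(x) = p^{-1}\sum_{a \in \F_p} \omega^{a q(x)}$ with $\omega = e^{2\pi i/p}$. Expanding, the average count of configurations becomes
\[
\E_{\x} \prod_{i=1}^m 1_A(L_i(\x)) = p^{-m} \sum_{\mathbf{a} \in \F_p^m} \E_\x\, \omega^{\sum_i a_i q(L_i(\x))}.
\]
The exponent $\sum_i a_i q(L_i(\x))$ is the quadratic form on $(\F_p^n)^d$ whose matrix is the Kronecker product of the symmetric $d\times d$ matrix $R_\mathbf{a} := \sum_i a_i L_i^T L_i$ with $M$. For $\mathbf{a} \in V$ this form vanishes identically and contributes $1$ to the inner average; for $\mathbf{a} \notin V$ the tensor has rank at least $n$ (since $\operatorname{rank}(R_\mathbf{a} \otimes M) = \operatorname{rank}(R_\mathbf{a})\cdot n$ and $\operatorname{rank}(R_\mathbf{a}) \geq 1$), so diagonalising and applying the standard bound $|\E_y \omega^{y^T M y}| = p^{-n/2}$ gives an inner average of size $O(p^{-n/2})$. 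Summing over $\mathbf{a}$ yields
\[
\E_\x \prod_i 1_A(L_i(\x)) \geq p^{-m}|V| - p^{-n/2} \geq p^{1-m} - p^{-n/2}.
\]

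The same expansion gives density $\alpha = p^{-1} + O(p^{-n/2})$ and hence $\alpha^m = p^{-m} + O(p^{-n/2})$, so taking $\epsilon := (p-1)/(2p^m)$ with $n$ large establishes (ii). For (i), the balanced function is $f(x) = p^{-1}\sum_{a \neq 0}\omega^{aq(x)} + O(p^{-n/2})$; each non-zero quadratic phase $\omega^{aq}$ has all Fourier coefficients of modulus exactly $p^{-n/2}$ (complete the square, using that $M$ is invertible and $p$ is odd), so $\|\hat{f}\|_\infty = O(p^{-n/2})$ and the standard Fourier identity $\|f\|_{U^2}^4 = \sum_\xi |\hat f(\xi)|^4 \leq \|\hat f\|_\infty^2 \|f\|_2^2$ gives $\|f\|_{U^2} = O(p^{-n/4}) < \delta$ for $n$ large enough. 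I do not anticipate any real obstacle: the whole argument is an algebraic identity coming from the dependence, plus Weil-type cancellation for the non-dependent terms. The only point requiring care is the tensor-rank bound, which is why non-degeneracy of $M$ is essential.
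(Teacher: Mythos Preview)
Your proof is correct and follows essentially the same approach as the paper: both take $A$ to be the zero set of a full-rank quadratic form on $\F_p^n$, expand the indicator via additive characters, and use Gauss-sum bounds to isolate the main term. Your organization differs only cosmetically---you count the contributing $\mathbf{a}$'s directly via the dependency space $V$ and dispatch the error terms with a clean Kronecker-rank argument, whereas the paper first establishes the count for a maximal square-independent subsystem $(L_1,\dots,L_l)$ and then observes that $L_i(\x)\in A$ for $i>l$ is forced by the dependence---but the underlying mechanism is identical and the resulting lower bound $p^{-(m-\dim V)}$ is the same.
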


For the proof we require the following standard lemma, which
says that certain Gauss sums are small. A proof can be found in 
\cite{green:montreal}, for example. 

\begin{lemma}\label{gauss}
Suppose that $q: \F_p^n \rightarrow \F_p$ is a quadratic form of rank
$r$. That is, suppose that $q(x)=x^T M x +b^T x$ for some matrix
$M$ of rank $r$ and some vector $b \in \F_p^n$. Then
\[|\E_{x \in \F_p^n}\; \omega^{q(x)}| \leq p^{-r/2},\]
with equality if $b=0$. In particular, 
\[|\E_{x \in \F_p^n}\; \omega^{\eta x^Tx}| \leq p^{-n/2}\]
for any non-zero $\eta \in \F_p$.
\end{lemma}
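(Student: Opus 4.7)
The plan is to use an invertible linear change of variables over $\F_p^n$ to put $q$ into a standard diagonal form, and then reduce the expectation to a product of independent one‑variable character sums that can be evaluated directly. The argument is classical; the one thing that must be tracked throughout is the hypothesis that $p$ is odd, which is what makes symmetrization, diagonalization over $\F_p$, and completion of the square all legitimate.

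First, since $2$ is invertible in $\F_p$, I replace $M$ by $\tfrac{1}{2}(M+M^T)$; this gives the same function $x^T M x$ and the same rank $r$, so without loss of generality $M$ is symmetric. A symmetric matrix of rank $r$ over a field of odd characteristic admits a congruence diagonalization: there exists $P\in \mathrm{GL}_n(\F_p)$ with $P^T M P=\mathrm{diag}(\lambda_1,\dots,\lambda_r,0,\dots,0)$ and each $\lambda_i\ne 0$. Substituting $x=Py$ (a bijection of $\F_p^n$) and setting $c=P^T b$, the expectation becomes
\[\E_{y\in\F_p^n}\,\omega^{\sum_{i=1}^{r}\lambda_i y_i^2\,+\,\sum_{i=1}^{n}c_i y_i},\]
which factorizes coordinate by coordinate.

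For each ``linear-only'' coordinate $r<i\le n$, the factor $\E_{y_i}\omega^{c_iy_i}$ is $1$ if $c_i=0$ and $0$ if $c_i\ne 0$; so if any such $c_i$ is nonzero the whole expectation vanishes and the bound is trivial, and otherwise these coordinates contribute $1$. For each ``quadratic'' coordinate $i\le r$, I complete the square (valid because $2\lambda_i$ is invertible), namely
\[\lambda_i y_i^2+c_i y_i=\lambda_i\bigl(y_i+(2\lambda_i)^{-1}c_i\bigr)^2-(4\lambda_i)^{-1}c_i^2,\]
then absorb the shift into the variable (a bijection of $\F_p$) and the additive constant into a unimodular prefactor, so that the $i$-th factor contributes exactly $|\E_{z\in\F_p}\omega^{\lambda_i z^2}|$ in absolute value.

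It remains to evaluate the one-variable pure quadratic Gauss sum. Setting $S=\sum_{z\in\F_p}\omega^{\lambda z^2}$ for $\lambda\ne 0$ and writing $|S|^2=\sum_{z,w}\omega^{\lambda(z^2-w^2)}$, the substitution $w=z-h$ turns the exponent into $\lambda(2hz-h^2)$, and separating variables gives
\[|S|^2=\sum_{h}\omega^{-\lambda h^2}\sum_{z}\omega^{2\lambda h z}=p,\]
because the inner sum over $z$ is $p$ if $h=0$ and otherwise vanishes (since $2\lambda h\ne 0$ in $\F_p$, again using $p$ odd). Hence $|\E_z\omega^{\lambda z^2}|=p^{-1/2}$. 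Multiplying the $r$ quadratic factors gives $|\E_x\omega^{q(x)}|\le p^{-r/2}$, with equality when $b=0$, for then $c=P^Tb=0$ and no ``linear-only'' coordinate can force the product to zero. The ``in particular'' clause follows by taking $M=\eta I_n$ (rank $n$) and $b=0$. There is no genuine obstacle here: the only subtle point is the repeated use of odd characteristic, which is essential at symmetrization, diagonalization of symmetric forms over $\F_p$, completion of the square, and non-vanishing of $2\lambda h$ in the final Gauss sum computation.
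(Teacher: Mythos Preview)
The paper does not actually prove this lemma; it simply quotes it as a standard fact and points to \cite{green:montreal} for a proof. Your argument is exactly the classical one that appears there (diagonalize the symmetric form, factorize the exponential sum, and evaluate the one-variable quadratic Gauss sum via $|S|^2=p$), and it is correct.

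One small remark on presentation: your sentence ``this gives the same function $x^T M x$ and the same rank $r$'' is not literally true for an arbitrary matrix $M$, since symmetrization can change the rank (e.g.\ $M=\begin{pmatrix}0&1\\0&0\end{pmatrix}$). The intended reading of the lemma, consistent with how rank is used elsewhere in the paper, is that $r$ is the rank of the quadratic form, i.e.\ of the symmetric matrix $\tfrac12(M+M^T)$; under that reading your step is of course fine. It would be cleaner simply to say ``without loss of generality $M$ is symmetric of rank $r$'' and move on.
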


\begin{proof}[Proof of Theorem \ref{badex}]
Let $A$ be the set $\{x\in\F_p^n:x^Tx=0\}$. Then the characteristic
function of $A$ can be written as
\[A(x)=\E_u\omega^{ux^Tx},\]
where $\omega=\exp(2\pi i/p)$ and the expectation is taken over
$\F_p$. Let us now take any square-\emph{independent} system 
$\Lsys=(L_1,\dots,L_m)$ of linear forms in $\x=(x_1,\dots,x_d)$ 
and estimate the expectation $\E_\x\prod_iA(L_i(\x))$.

Using the formula for $A(x)$, we can rewrite this expectation as
\[\E_{\x\in(\F_p^n)^d}\E_{u_1,\dots,u_m\in\F_p}
\omega^{\sum_iu_iL_i(\x)^TL_i(\x)}.\]
We can break this up into $p^m$ expectations over $\x$, one for each
choice of $u_1,\dots,u_m$. 

If the $u_i$ are all zero, then the expectation over $\x$ is just the
expectation of the constant function 1, so it is 1. Otherwise, since
the quadratic forms $L_i^TL_i$ are linearly independent, the sum
$\sum_iu_iL_i(\x)^TL_i(\x)$ is a non-zero quadratic form 
$q(x)=\sum_{i,j}\gamma_{ij}x_i^Tx_j$.

Without loss of generality, there exists $j$ such that $\gamma_{1j}\ne
0$.  If in addition $\gamma_{11}=0$, then for every choice of
$x_2,\dots,x_d$ we can write $q(x)$ in the form $r^Tx_1+z$, where
$r=\sum_j\gamma_{1j}x_j$ and $z$ depends on $x_2,\dots,x_d$ only.
This is a non-constant linear function of $x_1$ except when
$\sum_j\gamma_{1j}x_j=0$. Since not every $\gamma_{1j}$ is zero, 
this happens with probability $p^{-n}$. Therefore, $|\E_\x\omega^{q(\x)}|\leq
p^{-n}$ in this case. 
If $\gamma_{11}\ne 0$, then
this same function has the form $\gamma_{11}x_1^Tx_1+r^Tx_1$ for some
element $r\in\F_p^n$ (which depends on $x_2,\dots,x_d$).  In this
case, Lemma \ref{gauss} implies that the expectation is at most
$p^{-n/2}$.

Since the probability that $u_1=\dots=u_m=0$ is $p^{-m}$, this shows
that 
\[\Bigl|\E_{\x \in (\F_p^n)^d} \prod_iA(L_i(\x))-p^{-m}\Bigr|\le p^{-n/2}.\]

Applying this result in the case where $\Lsys$ consists of the single
form $x$, we see that the density of $A$ differs from $p^{-1}$ by
at most $p^{-n/2}$. Therefore, we have shown that for this particular
set $A$, square-independence of $\Lsys$ guarantees approximately 
the ``correct'' probability that every $L_i(\x)$ lies in $A$.

This may seem like the opposite of what we were trying to prove, but
in fact we have almost finished, for the following simple reason.  If
we now take $\Lsys$ to be an arbitrary system $(L_1,\dots,L_m)$ of
linear forms, then we can choose from it a maximal square-independent
subsystem. Without loss of generality this subsystem is
$(L_1,\dots,L_l)$.  Then all the quadratic forms $L_i^TL_i$ with $i>l$
are linear combinations of $L_1^T L_1,\dots,L_l^T L_l$, so a
sufficient condition for every $L_i^TL_i(\x)$ to be zero is that it is
zero for every $i\leq l$. But this we know happens with probability
approximately $p^{-l}$ by what we have just proved.  Therefore, if
$\Lsys$ is not square-independent, then $A^m$ contains ``too many''
$m$-tuples of the form $(L_1(\x),\dots,L_m(\x))$.
\end{proof}

\subsection{A review of quadratic Fourier analysis}

We shall now turn our attention to the main result of this paper,
which states that if $\Lsys$ has CS-complexity at most 2 and is
square-independent, then the true complexity of $\Lsys$ is at most
1. We begin with a quick review of quadratic Fourier analysis for
functions defined on $\F_p^n$. Our aim in this review is to give
precise statements of the results that we use in our proof. The reader
who is prepared to use quadratic Fourier analysis as a black box
should then find that this paper is self-contained.

So far in our discussion of uniformity, we have made no mention of
Fourier analysis at all. However, at least for the $U^2$-norm, there 
is a close connection. Let $f$ be a complex-valued function defined on a
finite Abelian group $G$. If $\chi$ is a character on $G$, the Fourier
coefficient $\hat{f}(\chi)$ is defined to be $\E_xf(x)\chi(x)$. The
resulting Fourier transform satisfies the convolution identity
$\widehat{f*g}=\hat{f}\hat{g}$, Parseval's identity
$\|\hat{f}\|_2=\|f\|_2$ and the inversion formula
$f(x)=\sum_\chi\hat{f}(\chi)\chi(-x)$.  (The second and third
identities depend on the correct choice of normalization: $\|f\|_2^2$
is defined to be $\E_x|f(x)|^2$, whereas $\|\hat{f}\|_2^2$ is defined
to be $\sum_\chi|\hat{f}(\chi)|^2$.  That is, as mentioned earlier, we
take averages in $G$ and sums in $\hat{G}$.) It follows that
$\|f\|_{U^2}^4=\|\hat{f}\|_4^4$, since both are equal to
$\|f*f\|_2^2$.

It is often useful to split a function $f$ up into a ``structured''
part and a uniform part. One way of doing this is to let $K$ be
the set of all characters $\chi$ for which $|\hat{f}(\chi)|$ is 
larger than some $\delta$ and to write $f=f_1+f_2$, where
$f_1=\sum_{\chi\in K}\hat{f}(\chi)\chi(-x)$ and 
$f_2=\sum_{\chi\notin K}\hat{f}(\chi)\chi(-x)$. If $\|f\|_\infty\le 1$,
(as it is in many applications), then Parseval's identity implies
that $|K|\le\delta^{-2}$, and can also be used to show that
$\|f_2\|_{U^2}\le\delta^{1/2}$. That is, $K$ is not too large,
and $f_2$ is $\delta^{1/2}$-uniform.

When $G$ is the group $\F_p^n$, the characters all have the
form $x\mapsto\omega^{r^Tx}$. Notice that this character is
constant on all sets of the form $\{x:r^Tx=u\}$, and that these
sets partition $\F_p^n$ into $p$ affine subspaces of codimension
1. Therefore, one can partition $\F_p^n$ into at most $p^{|K|}$
affine subspaces of codimension $|K|$ such that $f_1$ is 
constant on each of them. This is the sense in which $f_1$
is ``highly structured''.

The basic aim of quadratic Fourier analysis is to carry out a
similar decomposition for the $U^3$-norm. That is, given a function
$f$, we would like to write $f$ as a sum $f_1+f_2$, where $f_1$ is
``structured'' and $f_2$ is \emph{quadratically} uniform. Now this
is a stronger (in fact, much stronger) property to demand of $f_2$,
so we are forced to accept a weaker notion of structure for $f_1$.

Obtaining any sort of structure at all is significantly harder than
it is for the $U^2$-norm, and results in this direction are much 
more recent. The first steps were taken in \cite{gowers:SzT4} and
\cite{gowers:SzT} for the group $\Z_N$ in order to give an 
analytic proof of Szemer\'edi's
theorem. The structure of that proof was as follows: Theorem \ref{vn}
(of the present paper) can be used to show that if a set $A$ is sufficiently
uniform of degree $k-2$, then it must contain an arithmetic progression
of length $k$. Then an argument that is fairly easy when $k=3$ but
much harder when $k\geq 4$ can be used to show that if $A$ is
\emph{not} $c$-uniform of degree $k$, then it must have
``local correlation'' with a function of the form $\omega^{\phi(x)}$,
where $\omega=\exp{2\pi i/N}$ and $\phi$ is a polynomial of degree
$d$. ``Local'' in this context means that one can partition $\Z_N$
into arithmetic progressions of size $N^\eta$ (for some $\eta$ that
depends on $c$ and $k$ only) on a large proportion of which one can 
find such a correlation.

This was strong enough to prove Szemer\'edi's theorem, but for several
other applications the highly local nature of the correlation is too weak. 
However, in the quadratic case,
this problem has been remedied by Green and Tao \cite{greentao:u3}. In
this case, the obstacle to ``globalizing'' the argument is that a
certain globally-defined bilinear form that occurs in the proof of
\cite{gowers:SzT} is not symmetric, and thus does not allow one to
define a corresponding globally-defined quadratic form. (In the
context of $\Z_N$, ``global'' means something like ``defined on a
proportional-sized Bohr set''. For $\F_p^n$ one can take it to mean
``defined everywhere''.) Green and Tao discovered an
ingenious ``symmetry argument'' that allows one to replace the
bilinear form by one that \emph{is} symmetric, and this allowed them
to prove a quadratic structure theorem for functions with large
$U^3$-norm that is closely analogous to the linear structure theorem
that follows from conventional Fourier analysis.

An excellent exposition of this structure theorem when the group
$G$ is a vector space over a finite field can be found in 
\cite{green:montreal}. This contains proofs of all the background
results that we state here.

Recall that in the linear case, we called $f_1$ ``structured'' because
it was constant on affine subspaces of low codimension.  For quadratic
Fourier analysis, we need a quadratic analogue of the notion of a
decomposition of $\F_p^n$ into parallel affine subspaces of
codimension $d_1$. In order to define such a decomposition, one can take
a surjective linear map $\Gamma_1:\F_p^n\rightarrow\F_p^{d_1}$ and for
each $a\in\F_p^{d_1}$ one can set $V_a$ to equal $\Gamma_1^{-1}(\{a\})$.
If we want to make this idea quadratic, we should replace the 
linear map $\Gamma_1$ by a ``quadratic map'' $\Gamma_2$, which we do
in a natural way as follows. We say that a function
$\Gamma_2:\F_p^n\rightarrow\F_p^{d_2}$ is \emph{quadratic} if it is
of the form $x\mapsto(q_1(x),\dots,q_{d_2}(x))$, where $q_1,\dots,q_{d_2}$
are quadratic forms on $\F_p^n$. Then, for each $b\in\F_p^{d_2}$ we
define $W_b$ to be $\{x\in\F_p^n:\Gamma_2(x)=b\}$.

In \cite{greentao:r4boundsI}, Green and Tao define $\bone$ to be the algebra
generated by the sets $V_a$ and $\btwo$ for the finer algebra 
generated by the sets $V_a\cap W_b$. They call $\bone$ a 
\emph{linear factor of complexity} $d_1$ and $(\bone,\btwo)$ a 
\emph{quadratic factor of complexity} $(d_1,d_2)$. This is to draw
out a close analogy with the ``characteristic factors'' that 
occur in ergodic theory. 

These definitions give us a suitable notion of a ``quadratically
structured'' function---it is a function $f_1$ for which we can
find a linear map $\Gamma_1:\F_p^n\rightarrow\F_p^{d_1}$ and
a quadratic map $\Gamma_2:\F_p^n\rightarrow\F_p^{d_2}$ such that
$d_1$ and $d_2$ are not too large and $f_1$ is constant on the
sets $V_a\cap W_b$ defined above. This is equivalent to saying that
$f_1$ is measurable with respect to the algebra $\btwo$, and also
to saying that $f_1(x)$ depends on $(\Gamma_1(x),\Gamma_2(x))$ only.

The quadratic structure theorem of Green and Tao implies that a bounded
function $f$ defined on $\F_p^n$ can be written as a sum $f_1+f_2$,
where $f_1$ is quadratically structured in the above sense, and
$\|f_2\|_{U^3}$ is small.  In \cite{greentao:r4boundsI} the result is
stated explicitly for $p=5$, but this is merely because of the
emphasis placed on 4-term progressions. The proof is not affected by
the choice of $p$ (as long as it stays fixed).

In the statement below, we write $\E(f|\btwo)$ for the conditional
expectation, or averaging projection, of $f$. That is, if $X=V_a\cap W_b$
is an atom of $\btwo$ and $x\in X$, then $\E(f|\btwo)(x)$ is the
average of $f$ over $X$. Since the function $\E(f|\btwo)$ is constant
on the sets $V_a\cap W_b$, it is quadratically structured in the
sense that interests us.

\begin{theorem}\cite{greentao:r4boundsI}\label{qd1}
Let $p$ be a fixed prime, let $\delta >0$ and suppose that
$n > n_0(\delta)$ is sufficiently large. Given any function $f: \F_p^n
\rightarrow [-1,1]$, there exists a quadratic factor $(\bone,\btwo)$
of complexity at most $((4 \delta^{-1})^{3C_0+1}, (4
\delta^{-1})^{2C_0+1})$ together with a decomposition
\[f=f_1+f_2,\]
where
\[f_1 := \E(f|\btwo) \;\;\mbox{ and }\;\; \|f_2\|_{U^3} \leq \delta.\]
The absolute constant $C_0$ can be taken to be $2^{16}$.
\end{theorem}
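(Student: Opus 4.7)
The plan is an energy-increment argument analogous to the Koopman--von~Neumann style proof of the usual linear Fourier decomposition, but driven by a quadratic inverse theorem for the $U^3$-norm rather than by Parseval's identity. One starts with the trivial quadratic factor $(\bone^{(0)}, \btwo^{(0)})$, for which $\E(f \,|\, \btwo^{(0)})$ is just the constant $\E f$, and at stage $j$ one sets $f_2^{(j)} := f - \E(f\,|\,\btwo^{(j)})$ and tests whether $\|f_2^{(j)}\|_{U^3} \le \delta$. If so, we stop; otherwise we enrich the factor with new linear and quadratic coordinates and iterate.

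The substantive external input we need is the $U^3$-inverse theorem of Green and Tao for $\F_p^n$: for fixed prime $p$ there is an absolute constant $C_0 = 2^{16}$ such that any $g : \F_p^n \to [-1,1]$ with $\|g\|_{U^3} \ge \delta$ correlates with a quadratic phase,
\[\Bigl|\E_{x \in \F_p^n} g(x)\, \omega^{q(x) + \ell(x)}\Bigr| \;\ge\; (4\delta^{-1})^{-C_0},\]
for some quadratic form $q$ and linear form $\ell$ on $\F_p^n$. I would take this as a black box, as the paper suggests. The reason it is available only in the quadratic case is precisely the Green--Tao symmetry argument already flagged in the review: the bilinear object produced by iterated Cauchy--Schwarz on $\|g\|_{U^3}$ need not be symmetric, and an ingenious symmetrization step is required to convert it into a genuine quadratic phase.

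Granted the inverse theorem, the refinement step is mechanical. Applied to $f_2^{(j)}$, it yields $q$ and $\ell$ with $|\langle f_2^{(j)}, \omega^{q+\ell}\rangle| \ge \eta := (4\delta^{-1})^{-C_0}$. One then defines $\Gamma_2^{(j+1)}$ by appending the coordinate $q$ to $\Gamma_2^{(j)}$, and $\Gamma_1^{(j+1)}$ by appending $\ell$, together with a bounded number of further linear forms to ensure the enlarged quadratic factor is regular (so that atoms have roughly equal size and low-rank quadratic generators are appropriately normalised). Since $\omega^{q+\ell}$ is constant on atoms of $\btwo^{(j+1)}$, Pythagoras for the orthogonal projection $\E(\,\cdot\,|\,\btwo^{(j+1)})$ gives
\[\|\E(f\,|\,\btwo^{(j+1)})\|_2^2 - \|\E(f\,|\,\btwo^{(j)})\|_2^2 \;\ge\; |\langle f_2^{(j)}, \omega^{q+\ell}\rangle|^2 \;\ge\; \eta^2.\]

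Because $\|\E(f\,|\,\btwo^{(j)})\|_2^2 \le \|f\|_\infty^2 \le 1$ throughout, the iteration terminates after at most $\eta^{-2} \le (4\delta^{-1})^{2C_0}$ stages. Each stage adds one generator to $\Gamma_2$ and a controlled number to $\Gamma_1$ (at most on the order of $\eta^{-1}$, from the regularisation), which together yield the stated complexity bound $\bigl((4\delta^{-1})^{3C_0+1}, (4\delta^{-1})^{2C_0+1}\bigr)$ after straightforward book-keeping. The main obstacle, and essentially the only genuinely new ingredient, is the symmetry argument inside the $U^3$-inverse theorem; the energy-increment machinery, the regularisation step, and the Pythagorean computation above are standard adaptations of the linear Fourier case.
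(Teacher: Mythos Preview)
The paper does not itself prove Theorem~\ref{qd1}; it is quoted verbatim as a result of Green and Tao \cite{greentao:r4boundsI}, and the surrounding discussion directs the reader to \cite{green:montreal} and \cite{greentao:r4boundsI} for the argument. So there is no ``paper's own proof'' to compare against. Your sketch is, however, exactly the argument those references give: an energy-increment iteration driven by the $U^3$-inverse theorem, with the Pythagorean identity for nested conditional expectations bounding the number of steps by $\eta^{-2}$. In that sense your proposal is correct and matches the intended source.

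One small remark. The regularisation you insert at each step (appending extra linear coordinates so that atoms have roughly equal size and low-rank quadratic generators are tamed) is not needed for Theorem~\ref{qd1} as stated: that theorem imposes no rank condition on $(\bone,\btwo)$ and has only the two-term decomposition $f=f_1+f_2$. The rank requirement, the regularisation, and the third error term small in $L_2$ all belong to Theorem~\ref{qd2}, which the paper explicitly says must be ``read out of'' the Green--Tao papers by an additional iteration. For Theorem~\ref{qd1} itself it suffices at each stage to append the single quadratic form $q$ and the single linear form $\ell$ produced by the inverse theorem; the Pythagoras step you wrote down only needs $\omega^{q+\ell}$ to be $\btwo^{(j+1)}$-measurable, which this already guarantees. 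Conflating the two theorems does no harm to correctness, but it slightly obscures which ingredients belong to which statement.
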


As it stands, the above theorem is not quite suitable for applications,
because technical problems arise if one has to deal with quadratic
forms of low rank. (Notice that so far we have said nothing about
the quadratic forms $q_i$---not even that they are distinct.)
Let $\Gamma_2=(q_1,\dots,q_k)$ be a quadratic map and for each $i$
let $\beta_i$ be the symmetric bilinear form corresponding to $q_i$:
that is, $\beta_i(x,y)=(q_i(x+y)-q_i(x)-q_i(y))/2$. We shall say that 
$\Gamma_2$ \emph{is of rank at least} $r$ if the bilinear form
$\sum_i \lambda_i \beta_i$ has rank at least $r$ whenever 
$\lambda_1,\dots,\lambda_{d_2}$ are elements of $\F_p$ that are 
not all zero. If $\Gamma_2$ is used in combination with some 
linear map $\Gamma_1$ to define a quadratic factor $(\bone,\btwo)$, 
then we shall also say that this quadratic factor has rank at least $r$. 

Just to clarify this definition, let us prove a simple lemma that
will be used later.

\begin{lemma}\label{rankrestr}
Let $\beta$ be a symmetric bilinear form of rank $r$ on $\F_p^n$ 
and let $W$ be a subspace of $\F_p^n$ of codimension $d_1$. Then 
the rank of the restriction of $\beta$ to $W$ is at least $r-2d_1$.
\end{lemma}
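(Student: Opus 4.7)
The plan is to translate the statement into standard linear algebra about the radical of a bilinear form and then use a dimension count. View $\beta$ as the linear map $T:\F_p^n\to(\F_p^n)^*$ defined by $T(x)(y)=\beta(x,y)$. The rank of $\beta$ is the rank of $T$, so $\dim\ker T=n-r$, where $\ker T=\{x:\beta(x,y)=0\text{ for all }y\}$ is the radical of $\beta$. Similarly, the rank of the restriction $\beta|_W$ equals $\dim W-\dim(W\cap W^{\perp})$, where $W^{\perp}=\{x\in\F_p^n:\beta(x,w)=0\text{ for all }w\in W\}$ is the annihilator of $W$ with respect to $\beta$. So the task reduces to showing $\dim(W\cap W^{\perp})\le n-r+d_1$, since then the rank of $\beta|_W$ is at least $(n-d_1)-(n-r+d_1)=r-2d_1$.

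To bound $\dim W^{\perp}$, I would factor through the restriction map. Define $\phi:\F_p^n\to W^*$ by $\phi(x)=T(x)|_W$. By construction, $\ker\phi=W^{\perp}$, and the image of $\phi$ is obtained from the image of $T$ by restricting functionals from $(\F_p^n)^*$ to $W^*$. The kernel of this restriction map on $(\F_p^n)^*$ consists of linear functionals vanishing on $W$, a space of dimension $d_1$. Therefore the restriction map, applied to the $r$-dimensional image of $T$, loses at most $d_1$ dimensions, giving $\dim\operatorname{im}(\phi)\ge r-d_1$. By rank-nullity, $\dim W^{\perp}=n-\dim\operatorname{im}(\phi)\le n-r+d_1$, and since $W\cap W^{\perp}\subseteq W^{\perp}$, we get the desired bound.

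There is no real obstacle here: this is a three-line dimension count once the correct objects are written down. The only thing to be careful of is that the ambient dimension drops by $d_1$ when we restrict to $W$, while at the same time the radical can grow by up to $d_1$ (since vectors outside $\ker T$ may pair trivially with the smaller space $W$), which together explains the factor of $2$ in $r-2d_1$. Characteristic plays no role: the argument is valid over any field, and in particular over $\F_p$ for every prime $p$.
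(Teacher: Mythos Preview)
Your proof is correct and is essentially the same dimension count as the paper's: both establish $\dim W^{\perp}\le n-r+d_1$ and then use $W\cap W^{\perp}\subseteq W^{\perp}$ to conclude. The paper phrases the bound via a complement $Y$ of $W$ and the identity $V^{\perp}=W^{\perp}\cap Y^{\perp}$ together with the dimension formula for intersections, whereas you use the map $T:V\to V^{*}$ composed with restriction to $W^{*}$ and rank--nullity; these are interchangeable packagings of the same linear algebra.
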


\begin{proof} 
Let $V=\F_p^n$. For every subspace $W$ of $V$, let us write
$W^\perp$ for the subspace 
\[\{v\in V:\beta(v,w)=0\ \mbox{for every}\ w\in W\}.\]
Let us define the \emph{nullity} of $\beta$ to be the
dimension of $V^\perp$. Then the rank of $\beta$ is
equal to $n$ minus its nullity, which is the codimension
of $V^\perp$. We are assuming that this is $r$.

Now let $W$ have codimension $d_1$. We begin by bounding from above
the dimension of $W^\perp$. To do this, let $Y$ be a complement for 
$W$, which, by hypothesis, will have dimension $d_1$. Then 
$V^\perp=W^\perp\cap Y^\perp$ and $Y^\perp$ has dimension at 
least $n-d_1$, so the dimension of $W^\perp$ is at most 
$d_1+\dim(V^\perp)$, which, by hypothesis, is at least 
$d_1+n-r$. Therefore, the codimension of $W^\perp$ is at
most $r-d_1$, which implies that the codimension of 
$W^\perp$ inside $W$ is at most $r-2d_1$. This implies
the result. 
\end{proof}

We are now in a position to state the version of the structure theorem
that we shall be using. It can be read out of (but is not explicitly
stated in) \cite{green:montreal} and \cite{greentao:r4boundsI}.

\begin{theorem}\label{qd2}
Let $p$ be a fixed prime, let $\delta >0$, let $r: \N \rightarrow
\N$ an arbitrary function (which may depend on $\delta$) and
suppose that $n > n_0(r, \delta)$ is sufficiently large. Then given
any function $f: \F_p^n \rightarrow [-1,1]$, there exists
$d_0=d_0(r,\delta)$ and a quadratic factor $(\bone,\btwo)$ of rank at
least $r(d_1+d_2)$ and complexity at most $(d_1,d_2), d_1, d_2 \leq
d_0$, together with a decomposition
\[f=f_1+f_2+f_3,\]
where
\[f_1 := \E(f|\btwo),\;\;\; \|f_2\|_2 \leq \delta \;\;\mbox{ and }\;\; 
\|f_3\|_{U^3} \leq \delta.\]
\end{theorem}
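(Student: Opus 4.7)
The plan is to derive Theorem \ref{qd2} from Theorem \ref{qd1} by an iterative rank-reduction procedure, with the new $L^2$-small term $f_2$ absorbing the extra structure revealed when we enforce the rank condition on the quadratic factor.

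First, I would apply Theorem \ref{qd1} with a parameter $\delta_0 \ll \delta$ (to be fixed at the end in terms of $r$ and $\delta$) to obtain an initial quadratic factor $(\bone^{(0)},\btwo^{(0)})$ of complexity at most $(D_1,D_2)$, together with a decomposition $f = \E(f|\btwo^{(0)}) + g$ with $\|g\|_{U^3} \le \delta_0$. If this factor already has the required rank we are done with $f_2 = 0$; otherwise we perform rank reduction.

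For the rank reduction, suppose the current quadratic map $\Gamma_2 = (q_1,\ldots,q_{d_2})$ admits a nontrivial combination $q = \sum_i \lambda_i q_i$ of rank $R < r(d_1+d_2)$. Then $q$ factors through a linear projection, so there exist linear forms $\ell_1,\ldots,\ell_R$ and a quadratic form $\tilde q$ on $\F_p^R$ with $q(x) = \tilde q(\ell_1(x),\ldots,\ell_R(x))$. Adjoining $\ell_1,\ldots,\ell_R$ to $\Gamma_1$ makes $q$ measurable with respect to the enlarged $\bone$; since $q_{i^*} = \lambda_{i^*}^{-1}(q - \sum_{i \ne i^*}\lambda_i q_i)$ for some $i^*$ with $\lambda_{i^*} \ne 0$, I may then delete $q_{i^*}$ from $\Gamma_2$ without coarsening the factor. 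This strictly decreases $d_2$ while increasing $d_1$ by at most $R$, so the iteration terminates in at most $D_2$ steps and yields a quadratic factor $(\bone^*,\btwo^*)$ of rank at least $r(d_1^*+d_2^*)$, with total complexity bounded by some $d_0 = d_0(r,\delta_0)$. Since $\btwo^*$ refines $\btwo^{(0)}$ we have $\E(f|\btwo^*) = \E(f|\btwo^{(0)}) + \E(g|\btwo^*)$, suggesting the decomposition $f_1 := \E(f|\btwo^*)$, $f_3 := g$, $f_2 := -\E(g|\btwo^*)$, for which $\|f_3\|_{U^3} \le \delta_0 \le \delta$ is immediate.

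The main obstacle is the bound $\|f_2\|_2 \le \delta$, which is exactly where the high rank pays off. Since $f_2$ is $\btwo^*$-measurable, I would Fourier-expand it as $f_2 = \sum_{(c,c')} \alpha_{c,c'} \chi_{c,c'}$ where $\chi_{c,c'}(x) := \omega^{c\cdot\Gamma_1^*(x) + c'\cdot\Gamma_2^*(x)}$ runs over $(c,c') \in \F_p^{d_1^*}\times\F_p^{d_2^*}$; the rank condition together with Lemma \ref{gauss} guarantees approximate equidistribution of $(\Gamma_1^*,\Gamma_2^*)$, giving $\sum|\alpha_{c,c'}|^2 = \|f_2\|_2^2$ up to a negligible error. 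For each $(c,c')$ with $c' \ne 0$, the quadratic part $c'\cdot\Gamma_2^*$ of the exponent is nondegenerate, so the generalized von Neumann inequality for quadratic phases (a three-fold Cauchy--Schwarz argument in the spirit of the proof of Theorem \ref{gvn} for $k=2$) gives $|\langle g, \chi_{c,c'}\rangle| \le \|g\|_{U^3} \le \delta_0$; the $c'=0$ contribution is handled by $\|g\|_{U^2} \le \|g\|_{U^3}$. The projection identity $\|f_2\|_2^2 = \langle g, -f_2\rangle$, expanded in this basis and combined with Cauchy--Schwarz over the at most $p^{d_1^*+d_2^*}$ characters, then yields $\|f_2\|_2 \le p^{(d_1^*+d_2^*)/2}\delta_0 \le p^{d_0}\delta_0$. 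Choosing $\delta_0 := \delta \cdot p^{-d_0(r,\delta)}$ (a choice that is legitimate since $d_0$ ultimately depends only on $r$ and $\delta$, with the circularity in its definition resolved by picking $\delta_0$ small enough in a single pass) closes the argument.
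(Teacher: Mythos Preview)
The paper does not actually prove Theorem~\ref{qd2}; it states that the result ``can be read out of (but is not explicitly stated in)'' the Green and Green--Tao references, and only remarks that ``the argument involves an iteration that increases $d_0$ exponentially at every step'', leading to a tower-type bound. So there is no in-paper proof to compare against directly, but that hint is relevant to the gap in your proposal.

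Your rank-reduction step is correct (and does produce a genuine refinement of $\btwo^{(0)}$), and your Fourier expansion of $f_2$ over the characters $\chi_{c,c'}$ together with the bound $|\langle g,\chi_{c,c'}\rangle|\le\|g\|_{U^3}$ is exactly right: multiplying by a quadratic phase preserves the $U^3$-norm and $|\E h|\le\|h\|_{U^3}$. The resulting inequality $\|f_2\|_2\le p^{(d_1^*+d_2^*)/2}\delta_0$ is valid.

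The problem is the closing parameter choice. In your construction $d_0$ is a function of $\delta_0$: Theorem~\ref{qd1} already gives initial complexities of order $(4/\delta_0)^{O(1)}$, and rank reduction only increases the total. You then need $\delta_0\le\delta\,p^{-d_0(r,\delta_0)/2}$. But as $\delta_0\to 0$ the right-hand side shrinks at least double-exponentially in $1/\delta_0$, so there is \emph{no} $\delta_0$ satisfying this; ``picking $\delta_0$ small enough in a single pass'' moves in the wrong direction, and the assertion that ``$d_0$ ultimately depends only on $r$ and $\delta$'' is exactly the thing to be proved, not something available a priori.

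The standard way out --- and what the paper's remark about an iteration alludes to --- is an energy-increment loop that bounds the complexity in terms of $\delta$ rather than $\delta_0$. One builds the factor step by step: at each stage rank-reduce (this refines the factor, so $\|\E(f|\btwo)\|_2^2$ does not decrease), then test whether $\|f-\E(f|\btwo)\|_{U^3}\le\delta$; if not, the inverse $U^3$ theorem supplies a quadratic phase correlating at level $c(\delta)$, whose addition raises the energy by $c(\delta)^2$. Since energy is bounded by $1$, there are at most $c(\delta)^{-2}$ stages, and the final complexity depends only on $r$ and $\delta$. That breaks the circularity. With this organisation one can in fact take $f_2=0$; the $L^2$-small term in the stated theorem is a convenience that absorbs the discrepancies introduced when one instead passes through intermediate factors, as you were trying to do.
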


Note that $\E f_1=\E f$. In particular $\E f_1=0$ whenever $f$ is the
balanced function of a subset of $\F_p^n$. It can be shown that 
$f_1$ is uniform whenever $f$ is uniform: roughly speaking, the reason
for this is that $\E(f|\bone)$ is approximately zero and the atoms of
$\btwo$ are uniform subsets of the atoms of $\bone$. However, we shall
not need this fact. 

We shall apply Theorem \ref{qd2} when $r$ is the function $d\mapsto 2md+C$
for a constant $C$. Unfortunately, ensuring that factors have high rank
is an expensive process: even for this modest function the argument 
involves an iteration that increases $d_0$ exponentially at every step.
For this reason we have stated the theorem in a qualitative way. A
quantitative version would involve a tower-type bound.

\subsection{Square-independence is sufficient} 
We now have the tools we need to show
that square-independence coupled with CS-complexity $2$ is sufficient
to guarantee the correct number of solutions in uniform sets. The basic
idea of the proof is as follows. Given a set $A\subset\F_p^n$ of density
$\alpha$, we first replace it by its balanced function $f(x)=A(x)-\alpha$.
Given a square-independent linear system $\Lsys$ of complexity at most 2, 
our aim is to show, assuming that $\|f\|_{U^2}$ is sufficiently small, 
that 
\[\E_{\x\in(\F_p^{n})^d}\prod_{i=1}^mf(L_i(\x))\]
is also small. (Once we have done that, it will be straightforward to
show that the same average, except with $A$ replacing $f$, is close to 
$\alpha^m$.) In order to carry out this estimate, we first apply the
structure theorem to decompose $f$ as $f_1+f_2+f_3$, where $f_1$
is quadratically structured, $f_2$ is small in $L_2$ and $f_3$ is
quadratically uniform. This then allows us to decompose the product 
into a sum of $3^m$ products, one for each way of choosing $f_1$, 
$f_2$ or $f_3$ from each of the $m$ brackets. If
we ever choose $f_2$, then the Cauchy-Schwarz inequality implies
that the corresponding term is small, and if we ever choose $f_3$
then a similar conclusion follows from Theorem \ref{gvn}. Thus, the most
important part of the proof is to use the linear uniformity and
quadratic structure of $f_1$ to prove that the product 
\[\E_{\x\in(\F_p^{n})^d}\prod_{i=1}^mf_1(L_i(\x))\]
is small. This involves a calculation that generalizes the one
we used to prove Theorem \ref{badex}. The main step is the following lemma,
where we do the calculation in the case where the linear factor
$\bone$ is trivial.

\begin{lemma}\label{quadfactor}
Let $\Lsys=(L_1,\dots,L_m)$ be a square-independent system of linear
forms and let $\Gamma_2=(q_1,\dots,q_{d_2})$ be a quadratic map from
$\F_p^n$ to $\F_p^{d_2}$ of rank at least $r$. Let $\phi_1,\dots,\phi_m$
be linear maps from $(\F_p^n)^d$ to $\F_p^{d_2}$ and let $b_1,\dots,b_m$ 
be elements of $\F_p^{d_2}$. Let $\x=(x_1,\dots,x_d)$ be a randomly
chosen element of $(\F_p^n)^d$. Then the probability that
$\Gamma_2(L_i(\x))=\phi_i(\x)+b_i$ for every $i$ differs from 
$p^{-md_2}$ by at most $p^{-r/2}$.
\end{lemma}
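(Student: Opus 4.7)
The plan is to expand each indicator via Fourier characters on $\F_p^{d_2}$ and reduce everything to a family of Gauss-sum estimates. Writing $\omega=\exp(2\pi i/p)$ and
\[\mathbf{1}[\Gamma_2(L_i(\x))=\phi_i(\x)+b_i]=p^{-d_2}\sum_{u_i\in\F_p^{d_2}}\omega^{u_i\cdot(\Gamma_2(L_i(\x))-\phi_i(\x)-b_i)},\]
the probability in question becomes
\[p^{-md_2}\sum_{(u_1,\dots,u_m)}\omega^{-\sum_iu_i\cdot b_i}\,\E_{\x}\,\omega^{\sum_iu_i\cdot(\Gamma_2(L_i(\x))-\phi_i(\x))}.\]
The all-zero tuple contributes exactly $p^{-md_2}$, so the task reduces to showing that the inner expectation for each of the other $p^{md_2}-1$ tuples has absolute value at most $p^{-r/2}$; summing then gives the claimed estimate.

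Fix a nonzero tuple $(u_1,\dots,u_m)$, let $S=\{i:u_i\ne 0\}$, and set $\eta_i=\sum_k(u_i)_k\beta_k$, the symmetric bilinear form on $\F_p^n$ associated with $u_i$. The exponent of $\omega$ is a (possibly inhomogeneous) quadratic function of $\x\in(\F_p^n)^d$, whose pure quadratic part is
\[Q_2(\x)=\sum_{i\in S}\eta_i(L_i(\x),L_i(\x)).\]
By Lemma~\ref{gauss}, it suffices to show that $Q_2$ has rank at least $r$ as a quadratic form on $\F_p^{nd}$.

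The crux---and where I expect the real work to lie---is the following claim: there exists $v\in\F_p^d$ such that the vector $\sum_{i\in S}(v^T\Lambda_iv)\,u_i\in\F_p^{d_2}$ is nonzero, where $\Lambda_i=\ell_i\ell_i^T$ is the rank-one symmetric matrix built from the coefficient vector $\ell_i\in\F_p^d$ of $L_i$. Indeed, since $u_i\ne 0$ for every $i\in S$, one can choose a coordinate $k$ with $(u_i)_k\ne 0$ for some $i\in S$; by the square-independence of $\Lsys$, the matrices $\{\Lambda_i\}_{i\in S}$ are linearly independent in the space of symmetric $d\times d$ matrices over $\F_p$, hence $\sum_{i\in S}(u_i)_k\Lambda_i$ is a nonzero symmetric matrix, and the associated quadratic form on $\F_p^d$ does not vanish identically. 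Any $v$ on which it is nonzero witnesses the claim. Completing $v$ to a basis of $\F_p^d$ via a matrix $V$ with first column $v$ and performing the corresponding change of variables $\x\mapsto\y$ on each component (which preserves the uniform measure on $(\F_p^n)^d$ and replaces each $\ell_i$ by $V^T\ell_i$), the quadratic-in-$y_1$ part of $Q_2$ acquires matrix $\sum_k\bigl(\sum_{i\in S}(v^T\Lambda_iv)(u_i)_k\bigr)T_k$ on $\F_p^n$, where $T_k$ is the matrix of $\beta_k$; by the choice of $v$ this is a nontrivial linear combination of the $T_k$'s, so by the rank hypothesis on $\Gamma_2$ it has rank at least $r$. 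Fixing $y_2,\dots,y_d$ and applying Lemma~\ref{gauss} to the resulting quadratic Gauss sum in $y_1$ gives a uniform bound of $p^{-r/2}$, and averaging over the remaining variables completes the proof.
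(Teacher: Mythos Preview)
Your proof is correct and follows the same overall approach as the paper's: expand the indicator via characters, separate off the zero tuple, and for each nonzero tuple use square-independence together with the rank hypothesis on $\Gamma_2$ to bound a Gauss sum by $p^{-r/2}$. The one difference is that you perform a change of basis in $\F_p^d$ so as to isolate a single variable $y_1$ on which the restricted quadratic form already has rank at least $r$, whereas the paper works in the standard basis, picks a coordinate pair $(u,v)$ with $\sum_i\lambda_{ij}c_{iu}c_{iv}\ne 0$, and must then split into the cases $u=v$ (direct Gauss-sum estimate) and $u\ne v$ (a separate bilinear argument); your change-of-basis trick neatly avoids this case distinction.
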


\begin{proof}
Let $\Lambda$ be the set of all $m\times d_2$ matrices
$\lambda=(\lambda_{ij})$ over $\F_p$ and let us write
$\phi_i=(\phi_{i1},\dots,\phi_{id_2})$ and 
$b_i=(b_{i1},\dots,b_{id_2})$ for each $i$. The probability
we are interested in is the probability that 
$q_j(L_i(\x))=\phi_{ij}(\x)+b_{ij}$
for every $i\leq m$ and every $j\leq d_2$. This equals
\[\E_\x\E_{\lambda\in\Lambda}\prod_{i=1}^m\prod_{j=1}^{d_2}
\omega^{\lambda_{ij}(q_j(L_i(\x))-\phi_{ij}(\x)-b_{ij})},\]
since if $q_j(L_i(\x))=\phi_{ij}(\x)+b_{ij}$ for every $i$ and 
$j$, then the expectation over $\lambda$ is 1, and otherwise if we 
choose $i$ and $j$ such that $q_j(L_i(\x))\ne\phi_{ij}(\x)+b_{ij}$ 
and consider the expectation over $\lambda_{ij}$ while all other entries
of $\lambda$ are fixed, then we see that the expectation over
$\lambda$ is zero.

We can rewrite the above expectation as
\[\E_{\lambda\in\Lambda}\E_\x
\omega^{\sum_{i,j}\lambda_{ij}(q_j(L_i(\x))-\phi_{ij}(\x)-b_{ij})}.\]
If $\lambda=0$, then obviously the expectation over $\x$
is 1. This happens with probability $p^{-md_2}$.
Otherwise, for each $i$ let us say that the coefficients 
of $L_i$ are $c_{i1},\dots,c_{id}$. That is, let 
$L_i(\x)=\sum_{u=1}^d c_{iu}x_u$. Then 
\[q_j(L_i(\x))=\sum_{u,v}c_{iu}c_{iv}\beta_j(x_u,x_v),\]
where $\beta_j$ is the bilinear form associated with $q_j$.
Choose some $j$ such that $\lambda_{ij}$ is non-zero for
at least one $i$. Then the square-independence of the linear
forms $L_i$ implies that there exist $u$ and $v$ such that
$\sum_i\lambda_{ij}c_{iu}c_{iv}$ is not zero.

Fix such a $j$, $u$ and $v$ and do it in such a way that
$u=v$, if this is possible.
We shall now consider the expectation as $x_u$ and $x_v$ vary
with every other $x_w$ fixed. Notice first that
\[\sum_{i,j}\lambda_{ij}q_j(L_i(\x))=\sum_{i,j}\sum_{t,w}
\lambda_{ij}c_{it}c_{iw}\beta_j(x_t,x_w).\]
Let us write $\beta_{tw}$ for the bilinear form
$\sum_{i,j}\lambda_{ij}c_{it}c_{iw}\beta_j$, so that
this becomes $\sum_{t,w}\beta_{tw}(x_t,x_w)$. Let us
also write $\phi(\x)$ for $\sum_{ij}\lambda_{ij}\phi_{ij}(\x)$
and let $\phi_1,\dots,\phi_d$ be linear maps from $\F_p^n$ to
$\F_p$ such that $\phi(\x)=\sum_t\phi_t(x_t)$ for every $\x$. 
Then
\[\sum_{i,j}\lambda_{ij}(q_j(L_i(\x))-\phi_{ij}(\x))
=\sum_{t,w}\beta_{tw}(x_t,x_w)-\sum_t\phi_t(x_t).\]
Notice that if we cannot get $u$ to equal $v$, then 
$\sum_i\lambda_{ij}c_{iu}^2=0$ for every $u$ and every $j$,
which implies that $\beta_{uu}=0$. Notice also that the
assumption that $\Gamma_2$ has rank at least $r$ and the
fact that $\sum_i\lambda_{ij}c_{iu}c_{iw}\ne 0$ for at 
least one $j$ imply that $\beta_{uv}$ has rank at least $r$.

If we fix every $x_t$ except for $x_u$ and $x_v$, then 
$\sum_{t,w}\beta_{tw}(x_t,x_w)-\sum_t\phi_t(x_t)$ is a 
function of $x_u$ and $x_v$ of the form
\[\beta_{uv}(x_u,x_v)+\psi_u(x_u)+\psi_v(x_v),\]
where $\psi_u$ and $\psi_v$ are linear functionals on $\F_p^n$
(that depend on the other $x_t$). 

Now let us estimate the expectation
\[\E_{x_u,x_v}\omega^{\sum_{i,j}\lambda_{ij}
(q_j(L_i(\x))-\phi_{ij}(\x)-b_{ij})},\]
where we have fixed every $x_t$ apart from $x_u$ and $x_v$.
Letting $b=\sum\lambda_{ij}b_{ij}$ and using the calculations
we have just made, we can write this in the form
\[\E_{x_u,x_v}\omega^{\beta_{uv}(x_u,x_v)+\psi_u(x_u)+
\psi_v(x_v)-b}.\]
If $u=v$, then the expectation is just over $x_u$ and the 
exponent has the form $q(x_u)+w^Tu-b$ for
some quadratic form $q$ of rank at least $r$. Therefore,
by Lemma \ref{gauss}, the expectation is at most $p^{-r/2}$. If 
$u\ne v$ (and therefore every $\b_{uu}$ is zero) 
then for each $x_v$ the exponent is linear in $u$. This means
that either the expectation over $x_u$ is zero or the function
$\beta_{uv}(x_u,x_v)+\psi_u(x_u)$ is constant. If the latter
is true when $x_v=y$ and when $x_v=z$, then $\beta_{uv}(x_u,y-z)$
is also constant, and therefore identically zero. Since 
$\beta_{uv}$ has rank at least $r$, $y-z$ must lie in a 
subspace of codimension at least $r$. Therefore, the set of
$x_v$ such that $\beta_{uv}(x_u,x_v)+\psi_u(x_u)$ is constant
is an affine subspace of $\F_p^n$ of codimension at least $r$,
which implies that the probability (for a randomly chosen $x_v$)
that the expectation (over $x_u$) is non-zero is at most $p^{-r}$. 
When the expectation is non-zero, it has modulus 1. 

In either case, we find that, for any non-zero $\lambda\in\Lambda$, 
the expectation over $\x$ is at most $p^{-r/2}$, and this completes
the proof of the lemma.
\end{proof}

We now want to take into account $\Gamma_1$ as well as $\Gamma_2$.
This turns out to be a short deduction from the previous result.
First let us do a simple piece of linear algebra.

\begin{lemma}\label{linfactor}
Let $\Lsys=(L_1,\dots,L_m)$ be a collection of
linear forms in $d$ variables, and suppose that the linear span
of $L_1,\dots,L_m$ has dimension $d'$. Let 
$\Gamma_1:\F_p^n\rightarrow\F_p^{d_1}$ be a surjective linear map
and let $\phi:(\F_p^n)^d\rightarrow(\F_p^{d_1})^m$ be defined by 
the formula
\[\phi:\x\mapsto(\Gamma_1(L_1(\x)),\dots,\Gamma_1(L_m(\x))).\]
Then the image of $\phi$ is the subspace $Z$ of $(\F_p^{d_1})^m$
that consists of all sequences $(a_1,\dots,a_m)$ such that
$\sum_i\mu_ia_i=0$ whenever $\sum_i\mu_iL_i=0$. The dimension
of $Z$ is $d'd_1$.
\end{lemma}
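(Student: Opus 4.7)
The plan is to prove both the description of the image and the dimension formula by elementary linear algebra, using that $\Gamma_1$ is surjective and passing to a basis of the linear span of the $L_i$.

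First I would verify the easy inclusion: if $\sum_i \mu_i L_i = 0$ as a linear form, then $\sum_i \mu_i L_i(\x) = 0$ for every $\x$, so $\sum_i \mu_i \Gamma_1(L_i(\x)) = \Gamma_1(0) = 0$ by linearity of $\Gamma_1$. This shows $\phi((\F_p^n)^d) \subseteq Z$.

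For the reverse inclusion, after relabelling I would assume that $L_1, \dots, L_{d'}$ form a basis for the linear span of $\Lsys$. Then for each $i > d'$ there are unique coefficients $c_{ij} \in \F_p$ with $L_i = \sum_{j \leq d'} c_{ij} L_j$, and every linear relation among the $L_i$ is generated by the relations $L_i - \sum_{j \leq d'} c_{ij} L_j = 0$. Hence $Z$ consists precisely of those $(a_1, \dots, a_m)$ for which $a_i = \sum_{j \leq d'} c_{ij} a_j$ when $i > d'$, and projection onto the first $d'$ coordinates identifies $Z$ with $(\F_p^{d_1})^{d'}$. In particular $\dim Z = d' d_1$.

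It remains to show that every $(a_1, \dots, a_{d'}) \in (\F_p^{d_1})^{d'}$ is realised as $(\Gamma_1(L_1(\x)), \dots, \Gamma_1(L_{d'}(\x)))$ for some $\x \in (\F_p^n)^d$, since then the forced values of the remaining coordinates agree with the constraint defining $Z$. Because $L_1, \dots, L_{d'}$ are linearly independent as linear forms over $\F_p$ in $d$ variables, the $d' \times d$ coefficient matrix has rank $d'$, and so the map $\x \mapsto (L_1(\x), \dots, L_{d'}(\x))$ from $(\F_p^n)^d$ to $(\F_p^n)^{d'}$ is surjective. Composing with $\Gamma_1$ in each coordinate, which is itself surjective onto $\F_p^{d_1}$, yields a surjection from $(\F_p^n)^d$ onto $(\F_p^{d_1})^{d'}$. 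Combined with the previous paragraph this proves that $\phi((\F_p^n)^d) = Z$ and $\dim Z = d' d_1$.

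There is no real obstacle here; the only point that requires any care is choosing a basis of $\{L_1,\dots,L_m\}$ and checking that the constraints defining $Z$ are exactly the constraints coming from expressing each remaining $L_i$ in terms of the chosen basis, which is standard once one notes that the orthogonal complement (in $\F_p^m$) of the space of relations among $L_1,\dots,L_m$ has dimension $d'$.
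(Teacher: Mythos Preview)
Your proof is correct and proceeds by essentially the same elementary linear algebra as the paper's: both verify the easy inclusion $\phi((\F_p^n)^d)\subseteq Z$, compute $\dim Z=d'd_1$, and then show $\phi$ has rank $d'd_1$ by factoring it into two surjections. The only cosmetic difference is the order of factorisation---you apply the $L_i$ first and then $\Gamma_1$ coordinatewise, while the paper uses the commutation $\Gamma_1(L_i(\x))=L_i(\Gamma_1(\x))$ to apply $\Gamma_1$ first and then a rank--nullity count on $\y\mapsto(L_1(\y),\dots,L_m(\y))$; and you compute $\dim Z$ by choosing a basis of the span of the $L_i$, whereas the paper works directly with the $(m-d')$-dimensional space of relations.
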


\begin{proof}
Since the $m$ forms $L_i$ span a space of dimension $d'$, the set 
of sequences $\mu=(\mu_1,\dots,\mu_m)$ such that $\sum_i\mu_iL_i=0$
is a linear subspace $W$ of $\F_p^m$ of dimension $m-d'$. Therefore,
the condition that $\sum_i\mu_ia_i=0$ for every sequence $\mu\in W$
restricts $(a_1,\dots,a_m)$ to a subspace of $(\F_p^{d_1})^m$
of codimension $d_1(m-d')$. (An easy way to see this is to write
$a_i=(a_{i1},\dots,a_{id_1})$ and note that for each $j$ the sequence
$(a_{1j},\dots,a_{mj})$ is restricted to a subspace of codimension
$m-d'$.) Therefore, the dimension of $Z$ is $d'd_1$, as claimed.

Now let us show that $Z$ is the image of $\phi$. Since $\phi$ is 
linear, $Z$ certainly contains the image of $\phi$, so it will be
enough to prove that the rank of $\phi$ is $d'd_1$. 

Abusing notation, let us write $\Gamma_1(\x)$ for the sequence
$(\Gamma_1x_1,\dots,\Gamma_1x_d)$, which belongs to $(\F_p^{d_1})^d$. 
Then $\phi(\x)$ can be rewritten
as $(L_1(\Gamma_1(\x)),\dots,L_m(\Gamma_1(\x)))$.
Since $\Gamma_1$ is a surjection, it is also a surjection when
considered as a map on $(\F_p^n)^d$. Therefore, the rank of $\phi$ 
is the rank of the map $\psi:(\F_p^{d_1})^d\rightarrow(\F_p^{d_1})^m$ 
defined by
\[\psi:\y\mapsto(L_1(\y),\dots,L_m(\y)).\]
Since the $L_i$ span a space of dimension $d'$, the nullity of
this map is $d_1(d-d')$, so its rank is $d_1d'$. Therefore,
the image of $\phi$ is indeed $Z$.
\end{proof}

\begin{lemma}\label{completefactor}
Let $\Lsys=(L_1,\dots,L_m)$ be a square-independent system of
linear forms in $d$ variables, and suppose that the linear span
of $L_1,\dots,L_m$ has dimension $d'$. Let 
$\Gamma_1:\F_p^n\rightarrow\F_p^{d_1}$ be a surjective linear map and 
let $\Gamma_2:\F_p^n\rightarrow\F_p^{d_2}$ be a quadratic map of 
rank at least $r$.
Let $a_1,\dots,a_m$ be elements of $\F_p^{d_1}$ and let 
$b_1,\dots,b_m$ be elements of $\F_p^{d_2}$, and let $\phi$ 
and $Z$ be as defined in the previous lemma. Then the probability, if 
$\x$ is chosen randomly from $(\F_p^n)^d$, that
$\Gamma_1(L_i(\x))=a_i$ and $\Gamma_2(L_i(\x))=b_i$ for
every $i\leq m$ is zero if $(a_1,\dots,a_m)\in Z$, and 
otherwise it differs from $p^{-d_1d'-d_2m}$ by at most $p^{d_1-d'd_1-r/2}$.
\end{lemma}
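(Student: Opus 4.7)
My plan is to factor the joint event into a linear part (involving $\Gamma_1$) and a residual quadratic part (involving $\Gamma_2$), handle the linear part via Lemma~\ref{linfactor}, and then recognise the remaining quadratic condition as an instance of Lemma~\ref{quadfactor} on the subspace $W := \ker(\Gamma_1)$. I first condition on the event $E_1 := \{\phi(\x) = (a_1,\dots,a_m)\}$. Lemma~\ref{linfactor} identifies $\mathrm{im}(\phi)$ with $Z$, so $\Pr[E_1] = 0$ when $(a_1,\dots,a_m) \notin Z$ (which gives the degenerate case), and $\Pr[E_1] = p^{-d'd_1}$ otherwise. Fix $\x_0 \in (\F_p^n)^d$ with $\phi(\x_0) = (a_1,\dots,a_m)$; conditional on $E_1$, we may write $\x = \x_0 + \y$ with $\y$ uniform on $\ker(\phi)$.

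Next I parameterise $\ker(\phi)$. Assume without loss of generality that $L_1,\dots,L_{d'}$ form a basis of $\mathrm{span}\{L_1,\dots,L_m\}$, write $L_i = \sum_{k \leq d'} \alpha_{ik} L_k$ (with $\alpha_{ik} = \delta_{ik}$ for $i \leq d'$) and set $M_i(w) := \sum_{k \leq d'} \alpha_{ik} w_k$. Then $\y \in \ker(\phi)$ iff $L_k(\y) \in W$ for every $k \leq d'$; the map $\y \mapsto w := (L_1(\y),\dots,L_{d'}(\y))$ is a linear surjection from $\ker(\phi)$ onto $W^{d'}$ with fibres of size $p^{(d-d')n}$; and for $\y \in \ker(\phi)$ one has $L_i(\y) = M_i(w)$ for every $i$. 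Hence the condition $\Gamma_2(L_i(\x_0) + L_i(\y)) = b_i$ depends on $\y$ only through $w \in W^{d'}$, and a standard expansion using the symmetric bilinear forms $\beta_j$ rewrites it as
\[\Gamma_2(M_i(w)) = \psi_i(w) + c_i \quad (1 \leq i \leq m),\]
for suitable linear maps $\psi_i : W^{d'} \to \F_p^{d_2}$ and constants $c_i \in \F_p^{d_2}$ depending on $\x_0$.

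Finally I apply Lemma~\ref{quadfactor} to the reduced system $\Lsys' := (M_1,\dots,M_m)$ in $d'$ variables on the $\F_p$-vector space $W$ (of dimension $n - d_1$), with quadratic map $\Gamma_2|_W$ and the shifts $(\psi_i, c_i)$ above. Two checks are needed: $\Lsys'$ is square-independent, because the basis expansion $L_i = \sum_k \alpha_{ik} L_k$ induces a linear bijection between the spans of the outer products $\{L_i^T L_i\}$ and $\{M_i^T M_i\}$, so dependences among them coincide; and $\Gamma_2|_W$ has rank at least $r - 2d_1$, by Lemma~\ref{rankrestr} applied to each nontrivial $\F_p$-combination of $\beta_1,\dots,\beta_{d_2}$. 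Lemma~\ref{quadfactor} (whose proof carries over verbatim with $\F_p^n$ replaced by any $\F_p$-vector space) then shows that the conditional probability differs from $p^{-md_2}$ by at most $p^{-(r-2d_1)/2} = p^{d_1 - r/2}$; multiplying by $\Pr[E_1] = p^{-d'd_1}$ yields the claimed bound $p^{d_1 - d'd_1 - r/2}$ on the deviation from $p^{-d'd_1 - md_2}$. The main obstacle is the descent in the second paragraph: verifying that the quadratic condition on $\y$ depends only on the $d'$ residues in $W$, so that the $(d-d')n$-dimensional free part of $\ker(\phi)$ integrates out trivially and the quantitative cost of restricting to $W$ is captured precisely by the rank loss in Lemma~\ref{rankrestr}.
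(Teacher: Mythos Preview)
Your proof is correct and follows the same overall strategy as the paper: handle the degenerate case via Lemma~\ref{linfactor}, compute $\Pr[E_1]=p^{-d'd_1}$, pass to the kernel $W=\ker(\Gamma_1)$, invoke Lemma~\ref{rankrestr} for the rank loss, and finish with Lemma~\ref{quadfactor}. The one genuine difference is in the conditioning step. The paper conditions on the \emph{finer} event that $\Gamma_1(\x)=(\Gamma_1 x_1,\dots,\Gamma_1 x_d)$ takes a fixed value, so that $\y$ ranges over $V^d$ and one can apply Lemma~\ref{quadfactor} directly to the \emph{original} system $\Lsys$ on $V^d$; the estimate is uniform in the conditioning, so averaging back over all choices consistent with $\phi(\x)=\a$ is automatic. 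You instead condition only on $\phi(\x)=\a$, then parameterise $\ker(\phi)$ by $w\in W^{d'}$ via the independent forms $L_1,\dots,L_{d'}$, which forces you to introduce an auxiliary system $\Lsys'=(M_1,\dots,M_m)$ in $d'$ variables and verify that it inherits square-independence from $\Lsys$. Both routes work; the paper's is slightly more economical since it sidesteps the construction of $\Lsys'$ and the accompanying square-independence check, while yours makes explicit that only $d'$ effective variables are in play.
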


\begin{proof}
If $\a=(a_1,\dots,a_m)\notin Z$, then there exists $\mu\in\F_p^m$
such that $\sum_i\mu_ia_i\ne 0$ and $\sum_i\mu_iL_i(\x)=0$ for
every $\x$. Since $\Gamma_1$ is linear, it follows that there
is no $\x$ such that $\Gamma_1(L_i(\x))=a_i$ for every~$i$.

Otherwise, by Lemma \ref{linfactor}, $\a$ lies in the image of $\phi$, which
has rank $d'd_1$, so $\phi^{-1}(\{\a\})$ is an affine subspace
of $(\F_p^n)^d$ of codimension $d'd_1$. Therefore, the probability
that $\phi(\x)=\a$ is $p^{-d'd_1}$. Now let us use Lemma \ref{quadfactor} to
estimate the probability, conditional on this, that 
$\Gamma_2(L_i(\x))=b_i$ for every $i\leq m$.

In the proof of Lemma \ref{linfactor}, we observed that $\phi(\x)$
depends on $\Gamma_1(\x)$ only, so we shall estimate the required
probability, given the value of $\Gamma_1(\x)$. (Recall that this is
notation for $(\Gamma_1x_1,\dots,\Gamma_1x_d)$.) In order to specify
the set on which we are conditioning, let $V$ be the kernel of
$\Gamma_1$ (considered as a map defined on $\F_p^n$), and given a
sequence $(w_1,\dots,w_d)\in(\F_p^n)^d$, let us estimate the required
probability, given that $x_u\in V+w_u$ for every $u$.

Let us write $x_u=y_u+w_u$. Thus, we are estimating the probability
that $\Gamma_2(L_i(\y+\w))=b_i$ for every $i\leq m$. But for each
$i$ we can write $\Gamma_2(L_i(\y+\w))$ as 
$\Gamma_2(L_i(\y))+\phi_i(\y)+b_i'$ for some linear function 
$\phi_i:V^d\rightarrow\F_p^{d_2}$ and some vector $b_i'\in\F_p^{d_2}$.

Because $\Gamma_2$ has rank at least $r$ and the codimension of $V$ in
$\F_p^n$ is $d_1$, Lemma \ref{rankrestr} implies that the rank of the
restriction of $\Gamma_2$ to $V$ is at least $r-2d_1$. Therefore, by
Lemma \ref{quadfactor}, the probability that
$\Gamma_2(L_i(\y))=-\phi_i(\y)+b_i-b_i'$ for every $i$ differs from
$p^{-md_2}$ by at most $p^{d_1-r/2}$.

Since this is true for all choices of $\w$, we have the same estimate
if we condition on the event that $\phi(\x)=\a$ for some fixed $\a\in Z$. 
Therefore, the probability that $\Gamma_1(L_i(\x))=a_i$ and 
$\Gamma_2(L_i(\x))=b_i$ for every $i$ differs from $p^{-d'd_1-md_2}$
by at most $p^{d_1-d'd_1-r/2}$, as claimed.
\end{proof}

Next, we observe that Lemma \ref{completefactor} implies that all the atoms
of $\btwo$ have approximately the same size.

\begin{corollary}\label{atomsize}
Let $\Gamma_1$ and $\Gamma_2$ be as above and let $x$ be a randomly
chosen element of $\F_p^n$. Then for every $a\in\F_p^{d_1}$
and every $b\in\F_p^{d_2}$, the probability that $\Gamma_1(x)=a$
and $\Gamma_2(x)=b$ differs from $p^{-d_1-d_2}$ by at most $p^{-r/2}$.
\end{corollary}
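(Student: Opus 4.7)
The plan is to observe that this corollary is simply the one-form specialization of Lemma \ref{completefactor}. Take $d=1$ and let $\Lsys$ consist of the single linear form $L_1(x)=x$ in one variable. A single non-zero form is automatically square-independent (there is no non-trivial linear relation among its squares), so the hypotheses of Lemma \ref{completefactor} are met by $(\Gamma_1,\Gamma_2)$ with the rank hypothesis already assumed in the statement.

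With this choice, the linear span of $\Lsys$ has dimension $d'=1$, and there are no non-trivial $\mu\in\F_p^m=\F_p$ with $\mu L_1=0$. Consequently, the subspace $Z$ produced by Lemma \ref{linfactor} is all of $\F_p^{d_1}$, and so the trivial condition $a\in Z$ is always satisfied; there is no ``exceptional case'' to worry about. Setting $a_1=a$ and $b_1=b$ and plugging $m=1$, $d'=1$ into the conclusion of Lemma \ref{completefactor}, the probability that $\Gamma_1(L_1(x))=a$ and $\Gamma_2(L_1(x))=b$ differs from $p^{-d_1 d'-m d_2}=p^{-d_1-d_2}$ by at most $p^{d_1-d'd_1-r/2}=p^{-r/2}$.

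Since $L_1(x)=x$, this says precisely that $\P[\Gamma_1(x)=a,\ \Gamma_2(x)=b]$ differs from $p^{-d_1-d_2}$ by at most $p^{-r/2}$, which is the statement of the corollary. There is no real obstacle here: the whole content has already been done inside Lemma \ref{completefactor}, and this corollary just names the case that will be invoked repeatedly in later arguments (in particular, to justify that $\E(f|\btwo)$ behaves well and that averaging over atoms of $\btwo$ gives essentially uniform weights).
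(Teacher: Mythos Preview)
Your proof is correct and follows exactly the same route as the paper: apply Lemma~\ref{completefactor} to the single one-variable form $L_1(x)=x$, note that it is square-independent with $m=d'=1$, and read off the bound. Your extra remark that $Z=\F_p^{d_1}$ (so the degenerate case never occurs) is a nice clarification the paper leaves implicit.
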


\begin{proof}
Let us apply Lemma \ref{completefactor} in the case where $\Lsys$ consists of
the single one-variable linear form $L(x)=x$. This has linear rank 1
and is square-independent, so when we apply the lemma we have 
$d'=m=1$. If we let $a_1=a$ and $b_1=b$, then the conclusion of the 
lemma tells us precisely what is claimed.
\end{proof}

The next two lemmas are simple technical facts about projections
on to linear factors. The first one tells us that if $g$ is any 
function that is uniform and constant on the atoms of a linear 
factor, then it has small $L_2$-norm. The second tells us that
projecting on to a linear factor decreases the $U^2$-norm.

\begin{lemma}\label{younginverse}
Let $G$ be a function from $\F_p^{d_1}$ to $[-1,1]$, let 
$\Gamma_1:\F_p^n\rightarrow\F_p^{d_1}$ be a surjective linear 
map and let $g=G\circ\Gamma_1$. Then $\|g\|_2^4\leq p^{d_1}\|g\|_{U^2}^4$.
\end{lemma}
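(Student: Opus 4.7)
The plan is to reduce the inequality to a statement about $G$ on the smaller space $\F_p^{d_1}$ and then apply Fourier analysis.

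First I would observe that since $\Gamma_1$ is a surjective linear map, if $x$ is chosen uniformly in $\F_p^n$ then $\Gamma_1(x)$ is uniform in $\F_p^{d_1}$; more generally, if $x, h_1, h_2$ are independent uniform elements of $\F_p^n$, then $\Gamma_1(x), \Gamma_1(h_1), \Gamma_1(h_2)$ are independent uniform elements of $\F_p^{d_1}$, because $\Gamma_1(x+h) = \Gamma_1(x)+\Gamma_1(h)$. Since $g = G \circ \Gamma_1$, this change of variables immediately yields
\[\|g\|_2^2 = \|G\|_2^2 \quad\text{and}\quad \|g\|_{U^2}^4 = \|G\|_{U^2}^4.\]
So the claim reduces to proving $\|G\|_2^4 \leq p^{d_1} \|G\|_{U^2}^4$ for a function $G$ on $\F_p^{d_1}$.

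Next I would pass to the Fourier side. As recalled in the paper, Parseval's identity gives $\|G\|_2^2 = \sum_\chi |\hat{G}(\chi)|^2$, and the identity $\|G\|_{U^2}^4 = \|\hat{G}\|_4^4 = \sum_\chi |\hat{G}(\chi)|^4$ is also noted there (with sums taken over all $p^{d_1}$ characters $\chi$ of $\F_p^{d_1}$). The desired bound therefore takes the form
\[\Bigl(\sum_\chi |\hat{G}(\chi)|^2\Bigr)^{\!2} \leq p^{d_1} \sum_\chi |\hat{G}(\chi)|^4.\]
This is just the Cauchy--Schwarz inequality $\sum_\chi |\hat{G}(\chi)|^2 \cdot 1 \leq (\sum_\chi 1)^{1/2} (\sum_\chi |\hat{G}(\chi)|^4)^{1/2}$ applied to the $p^{d_1}$ Fourier coefficients, followed by squaring.

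There is no genuine obstacle here: the only content is the translation between $g$ and $G$ via surjectivity of $\Gamma_1$, after which the inequality is the standard embedding of $\ell^2$ into $\ell^4$ with the appropriate dimensional constant. The bound $\|G\|_\infty \leq 1$ plays no role; in fact the same proof works for any $G : \F_p^{d_1} \to \C$.
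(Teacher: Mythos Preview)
Your proof is correct and follows the same overall approach as the paper: both first use surjectivity of $\Gamma_1$ to reduce to the inequality $\|G\|_2^4\le p^{d_1}\|G\|_{U^2}^4$ on $\F_p^{d_1}$. The only difference is in the final one-line step: the paper stays in physical space, writing $\|G\|_{U^2}^4=\E_a(\E_bG(b)G(b+a))^2$ and simply keeping the $a=0$ term, whereas you pass to the Fourier side and apply Cauchy--Schwarz to the $p^{d_1}$ coefficients; these are equivalent standard arguments.
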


\begin{proof}
Since $\Gamma_1$ takes each value in $\F_p^{d_1}$ the same
number of times, $\|g\|_{U^2}=\|G\|_{U^2}$. But
\[\|G\|_{U^2}^4=\E_a(\E_bG(b)G(b+a))^2\geq p^{-d_1}(\E_bG(b)^2)^2
=p^{-d_1}\|G\|_2^4,\]
which proves the result, since $\|g\|_2=\|G\|_2$ as well.
\end{proof}

\begin{lemma}\label{u2pyth}
Let $f$ be a function from $\F_p^n$ to $\R$,
let $\bone$ be a linear factor and let $g=\E(f|\bone)$. Then
$\|g\|_{U^2}\leq\|f\|_{U^2}$.
\end{lemma}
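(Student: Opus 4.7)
The plan is to exploit the Fourier description $\|f\|_{U^2}^4 = \sum_\chi |\hat{f}(\chi)|^4$ that was recalled at the start of the quadratic Fourier analysis subsection, and observe that projecting onto a linear factor simply zeroes out some Fourier coefficients while leaving the rest unchanged. The inequality then drops out by monotonicity of a sum of non-negative terms.

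First I would identify which characters $\chi_s(x) = \omega^{s^T x}$ are $\bone$-measurable. A function on $\F_p^n$ is $\bone$-measurable precisely when it is constant on each atom $V_a = \Gamma_1^{-1}(\{a\})$, equivalently when it factors through $\Gamma_1$. For the character $\chi_s$ this happens iff $\chi_s$ is trivial on $\ker\Gamma_1$, i.e.\ iff $s$ lies in the image of $\Gamma_1^T$. The collection of such characters forms an orthonormal basis of the space of $\bone$-measurable functions (with respect to the inner product $\langle f_1,f_2\rangle = \E_x f_1(x)\overline{f_2(x)}$).

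Next, since $g = \E(f|\bone)$ is by definition the orthogonal projection of $f$ onto this subspace, its Fourier expansion is obtained by keeping exactly those coefficients: $\hat{g}(\chi) = \hat{f}(\chi)$ when $\chi$ is $\bone$-measurable, and $\hat{g}(\chi) = 0$ otherwise. Therefore
\[
\|g\|_{U^2}^4 \;=\; \sum_\chi |\hat{g}(\chi)|^4 \;=\; \sum_{\chi\ \bone\text{-meas.}} |\hat{f}(\chi)|^4 \;\leq\; \sum_\chi |\hat{f}(\chi)|^4 \;=\; \|f\|_{U^2}^4,
\]
and taking fourth roots gives the claim.

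There is no real obstacle here: the lemma is essentially a one-line consequence of the Plancherel-type identity $\|f\|_{U^2}^4 = \|\hat{f}\|_4^4$ together with the observation that conditional expectation onto a linear factor acts diagonally in the character basis. The only mildly delicate point is verifying that the $\bone$-measurable characters are exactly the $\chi_s$ with $s \in \mathrm{Im}(\Gamma_1^T)$, which is a routine piece of linear duality.
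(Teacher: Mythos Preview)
Your argument is correct, but it takes a different route from the paper's own proof. The paper works entirely in physical space: it expands $\|f\|_{U^2}^4$ as an average over quadruples $x_1+x_2=x_3+x_4$, groups these quadruples according to the atoms $V_{a_i}$ containing each $x_i$, writes $f=g+g'$ with $g'=f-g$, and checks that every cross term in the resulting $16$-fold expansion vanishes because $g'$ averages zero on each atom and any three of the $x_i$ can vary independently. This yields the exact Pythagorean identity $\|f\|_{U^2}^4=\|g\|_{U^2}^4+\|f-g\|_{U^2}^4$, from which the inequality follows.

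Your approach instead passes to the Fourier side via $\|f\|_{U^2}^4=\|\hat f\|_4^4$ and observes that $\E(\cdot|\bone)$ is the Fourier multiplier that retains precisely the characters trivial on $\ker\Gamma_1$. This is shorter, since it reduces the lemma to dropping non-negative terms from a sum, and it recovers the same Pythagorean identity once one notes that $\widehat{f-g}$ is supported on the complementary set of characters. The paper's direct computation, on the other hand, is more self-contained and makes the ``orthogonality'' of $g$ and $f-g$ with respect to the $U^2$ inner product visible without invoking the Fourier formula.
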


\begin{proof}
On every atom of $\bone$, $g$ is constant and $f-g$ averages zero. 
Let $\Gamma_1$ be
the linear map that defines $\bone$ and, as we did earlier,
for each $a\in\F_p^{d_1}$ let $V_a$ stand for $\Gamma_1^{-1}(\{a\})$. 
Then
\[\|f\|_{U^2}^4=\E_{a_1+a_2=a_3+a_4}\E_{x_1+x_2=x_3+x_4\atop
\Gamma_1(x_i)=a_i}f(x_1)f(x_2)f(x_3)f(x_4)\ .\]
Let us fix a choice of $a_1+a_2=a_3+a_4$ and consider the inner
expectation. Setting $g'=f-g$, this has the form
\[\E_{x_1+x_2=x_3+x_4\atop\Gamma_1(x_i)=a_i}
(\lambda_1+g'(x_1))(\lambda_2+g'(x_2))
(\lambda_3+g'(x_3))(\lambda_4+g'(x_4))\]
This splits into sixteen parts. Each part that involves at least
one $\lambda_i$ and at least one $g'(x_i)$ is zero, because any
three of the $x_i$s can vary independently and $g'$ averages zero
on every atom of $\bone$. This means that the expectation is
\[\lambda_1\lambda_2\lambda_3\lambda_4+
\E_{x_1+x_2=x_3+x_4\atop
\Gamma_1(x_i)=a_i}g'(x_1)g'(x_2)g'(x_3)g'(x_4)\ .\]
If we now take expectations over $a_1+a_2=a_3+a_4$ we find that
$\|f\|_{U^2}^4=\|g\|_{U^2}^4+\|f-g\|_{U^2}^4$. Notice that this
is a general result about how the $U^2$-norm of a function is
related to the $U^2$-norm of a projection on to a linear factor.
\end{proof}

Now we are ready to estimate the product we are interested
in, for functions that are constant on the atoms of $\btwo$. 

\begin{lemma}\label{bound1}
Let $\Gamma_1:\F_p^n\rightarrow\F_p^{d_1}$ be a linear function
and $\Gamma_2:\F_p^n\rightarrow\F_p^{d_2}$ be a quadratic function.
Let $(\bone,\btwo)$ be the corresponding quadratic factor and 
suppose that this has rank at least $r$. Let $c>0$ and let 
$f:\F_p^n\rightarrow[-1,1]$ be a function with $\|f\|_{U^2}\leq c$
and let $f_1=\E(f|\btwo)$. Let $\Lsys=(L_1,\dots,L_m)$ be a 
square-independent system of linear forms. Then
\[\E_{\x\in(\F_p^n)^d}\prod_{i=1}^mf_1(L_i(\x))\leq
4^mcp^{d_1/4}+2^{m+1}p^{m(d_1+d_2)-r/2}.\]
\end{lemma}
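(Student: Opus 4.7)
The plan is to express $f_1$ as the pullback of a bounded function $F:\F_p^{d_1}\times\F_p^{d_2}\to[-1,1]$ via $f_1(x)=F(\Gamma_1(x),\Gamma_2(x))$, then average out the quadratic coordinate to reduce the target estimate to a purely linear average that is controlled by $\|f\|_{U^2}$.

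Since $f_1$ is $\btwo$-measurable, such $F$ exists. Expanding in $(\a,\b)\in(\F_p^{d_1})^m\times(\F_p^{d_2})^m$ gives
\[\E_\x\prod_i f_1(L_i(\x))=\sum_{\a,\b}\P[\Gamma_1(L_i(\x))=a_i,\ \Gamma_2(L_i(\x))=b_i\ \forall i]\prod_i F(a_i,b_i).\]
Lemma~\ref{completefactor} says the probability vanishes unless $\a\in Z$ and otherwise equals $p^{-d'd_1-md_2}+E_{\a,\b}$ with $|E_{\a,\b}|\le p^{d_1-d'd_1-r/2}$. Since $|F|\le1$, the total contribution of the $E_{\a,\b}$ terms is at most $|Z|\cdot p^{md_2}\cdot p^{d_1-d'd_1-r/2}=p^{d_1+md_2-r/2}$, comfortably inside the second term of the stated bound.

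For the main term the product factorises in $\b$: $\sum_{\b}\prod_i F(a_i,b_i)=p^{md_2}\prod_i H(a_i)$ where $H(a):=\E_b F(a,b)$. Thus the main term equals $\E_{\a\in Z}\prod_i H(a_i)$, and by Lemma~\ref{linfactor} the map $\x\mapsto(\Gamma_1(L_i(\x)))_i$ is a linear surjection onto $Z$ with uniform pushforward, so after substituting $\y=\Gamma_1(\x)$ (surjectivity of $\Gamma_1$ makes $\y$ uniform on $(\F_p^{d_1})^d$) we obtain
\[\E_{\a\in Z}\prod_i H(a_i)=\E_{\y\in(\F_p^{d_1})^d}\prod_i H(L_i(\y)).\]
Since $|H|\le1$ pointwise and at least one $L_{i_0}$ depends on some variable $y_k$, averaging in $y_k$ first yields $|\E_\y\prod_i H(L_i(\y))|\le\|H\|_1\le\|H\|_2$.

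It remains to bound $\|H\|_2$. Let $g=\E(f\mid\bone)$ and write $g(x)=g_0(\Gamma_1(x))$. Corollary~\ref{atomsize} gives $|V_a\cap W_b|/|V_a|=p^{-d_2}+O(p^{d_1-r/2})$; summing over $b$ with $|F|\le1$ yields $\|g_0-H\|_\infty\le p^{d_1+d_2-r/2}$. A change of variables gives $\|g_0\|_{U^2}=\|g\|_{U^2}$; Lemma~\ref{u2pyth} then gives $\|g\|_{U^2}\le\|f\|_{U^2}\le c$, and Lemma~\ref{younginverse} yields $\|g_0\|_2\le p^{d_1/4}\|g_0\|_{U^2}\le p^{d_1/4}c$. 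Combining, $\|H\|_2\le p^{d_1/4}c+p^{d_1+d_2-r/2}$, and this together with the error bound above gives the claim (in fact with constants considerably smaller than the stated $4^m$ and $2^{m+1}$). The most delicate step is the sup-norm comparison of $g_0$ and $H$: it is precisely there that the rank hypothesis on $(\bone,\btwo)$ is consumed, via Corollary~\ref{atomsize}, to make the quadratic factor $\btwo$ behave approximately like a genuine product factor over $\F_p^{d_1}\times\F_p^{d_2}$.
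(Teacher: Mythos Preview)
Your argument is correct and uses the same ingredients as the paper (Lemmas~\ref{linfactor}--\ref{u2pyth} and Corollary~\ref{atomsize}), but the organisation is genuinely different. The paper first decomposes $f_1=g+h$ with $g=\E(f_1|\bone)$, expands the product into $2^m$ terms, bounds every term containing a $g$ by $\|g\|_1\le cp^{d_1/4}$, and then treats the single all-$h$ term using equidistribution together with the fact that $\E_bH_h(a,b)\approx 0$ on each linear atom. You instead avoid the $g+h$ split entirely: you apply equidistribution (Lemma~\ref{completefactor}) directly to the full $f_1$-product, which collapses the main term to $\E_{\a\in Z}\prod_iH(a_i)$ with $H(a)=\E_bF(a,b)$, and then bound this crudely by $\|H\|_1\le\|H\|_2$ (here the trivial estimate $\bigl|\prod_iH(L_i(\y))\bigr|\le|H(L_{i_0}(\y))|$ and the uniformity of $L_{i_0}(\y)$ suffice). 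The comparison $H\approx g_0$ via Corollary~\ref{atomsize} then transfers the bound to $\|g_0\|_2\le cp^{d_1/4}$. Your route is a little slicker and yields noticeably better constants (no $4^m$ factor, and exponent $d_1+md_2-r/2$ rather than $m(d_1+d_2)-r/2$ in the rank error); the paper's route makes the separate roles of the linear projection $g$ and the ``quadratic residue'' $h$ more explicit, which can be conceptually helpful when one later wants to iterate or generalise the argument.
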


\begin{proof}
Let $g=\E(f_1|\bone)$ and let $h=f_1-g$. Then 
$\|g\|_1\leq\|g\|_2\leq p^{d_1/4}\|g\|_{U^2}$,
by Cauchy-Schwarz and Lemma \ref{younginverse}. By Lemma \ref{u2pyth},
$\|g\|_{U^2}\leq\|f\|_{U^2}$, which is
at most $c$, by hypothesis. Therefore, $\|g\|_1\leq cp^{d_1/4}$.

Since $f_1=g+h$, we can split the product up into a sum of $2^m$
products, in each of which we replace $f_1(L_i(\x))$ by either
$g(L_i(\x))$ or $h(L_i(\x))$. Since $\|g\|_1\leq cp^{d_1/4}$
and $\|h\|_\infty\leq 2$, any product that involves at least one
$g$ has average at most $2^mcp^{d_1/4}$. It remains to estimate
\[\E_{\x\in(\F_p^n)^d}\prod_{i=1}^mh(L_i(\x)).\]

Let $Z$ be as defined in Lemma \ref{linfactor}, and for each
$\a=(a_1,\dots,a_m)$ and $\b=(b_1,\dots,b_m)$, let $P(\a,\b)$ be the
probability that $\Gamma_1(L_i(\x))=a_i$ and $\Gamma_2(L_i(\x))=b_i$
for every $i$.  By Lemma \ref{completefactor}, we can set
$P(\a,\b)=p^{-d'd_1-md_2}+\epsilon(\a,\b)$, with
$|\epsilon(\a,\b)|\leq p^{d_1-d'd_1-r/2}$.

Now let $H$ be defined by the formula 
$h(x)=H(\Gamma_1x,\Gamma_2x)$. Because $h$ is constant on the 
atoms of $\btwo$, $H$ is well-defined on the set of all elements
of $\F_p^{d_1}\times\F_p^{d_2}$ of the form $(\Gamma_1x,\Gamma_2x)$.
Since $h$ takes values in $[-2,2]$, so does $H$.

Next, we show that $\E_bH(a,b)$ is small for any fixed
$a\in\F_p^{d_1}$, using the facts that $h$ averages $0$ on every cell
of $\bone$ and that it is constant on the cells of $\btwo$. Let us
fix an $a$ and write $P(b)$ for the probability that $\Gamma_2(x)=b$ 
given that $\Gamma_1(x)=a$---that is, for the density of $V_a\cap W_b$
inside $V_a$. Then
\[0=\E_{x\in V_a}h(x)=\E_{x\in V_a}H(\Gamma_1x,\Gamma_2x)=\sum_b P(b)H(a,b).\]
By Corollary \ref{atomsize}, we can write $P(b)=p^{-d_2}+\epsilon(b)$, with
$|\epsilon(b)|\leq p^{d_1-r/2}$ for every $b$. Therefore, the right-hand
side differs from $\E_bH(a,b)$ by at most $2p^{d_1+d_2-r/2}$, which
implies that $|\E_bH(a,b)|\leq 2p^{d_1+d_2-r/2}$.

Now
\[\E_\x\prod_{i=1}^mh(L_i(\x))=\E_\x\prod_{i=1}^mH(\Gamma_1(L_i(\x)),
\Gamma_2(L_i(\x)))=\sum_{\a\in Z}\sum_\b P(\a,\b)\prod_{i=1}^mH(a_i,b_i).\]
Let us split up this sum as
\[p^{-d'd_1-md_2}\sum_{\a\in Z}\sum_\b\prod_{i=1}^mH(a_i,b_i)
+\sum_{\a\in Z}\sum_\b \epsilon(\a,\b)\prod_{i=1}^mH(a_i,b_i).\]

The first term equals $\E_{a\in Z}\prod_{i=1}^m(\E_bH(a_i,b))$,
which is at most $(2p^{d_1+d_2-r/2})^m$. The second is at most
$p^{(d'd_1+md_2)}2^mp^{d_1-d'd_1-r/2}=2^mp^{d_1+md_2-r/2}$. 
Therefore, the whole sum is at most $2^{m+1}p^{m(d_1+d_2)-r/2}$.
Together with our estimate for the terms that involved $g$,
this proves the lemma. 
\end{proof}

We have almost finished the proof of our main result.

\begin{theorem}\label{bound2}
For every $\epsilon>0$ there exists $c>0$ 
with the following property. Let $f:\F_p^n\rightarrow[-1,1]$
be a $c$-uniform function. Let $\Lsys=(L_1,\dots,L_m)$ be a
square-independent system of linear forms in $d$ variables, 
with Cauchy-Schwarz complexity at most 2. Then 
\[\E_{\x\in(\F_p^n)^d}\prod_{i=1}^m f(L_i(\x))\leq\epsilon.\]
\end{theorem}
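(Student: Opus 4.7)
The plan is to apply the quadratic structure theorem (Theorem \ref{qd2}) to decompose $f = f_1 + f_2 + f_3$ with $f_1 = \E(f|\btwo)$, $\|f_2\|_2 \leq \delta$ and $\|f_3\|_{U^3} \leq \delta$, then expand the product $\prod_{i=1}^m f(L_i(\x))$ into a sum of many pieces and estimate each piece separately by one of three mechanisms: Lemma \ref{bound1} handles the fully structured all-$f_1$ term, the $L^2$-smallness of $f_2$ handles pieces containing an $f_2$ factor, and Theorem \ref{gvn} (which applies because $\Lsys$ has CS-complexity at most $2$) handles pieces containing an $f_3$ factor.

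The parameters must be chosen in the correct order. Given $\eps>0$, I first fix the rank function $r(d):=2md+C$, with $C=C(\eps,m,p)$ large enough that $2^{m+1}p^{-C/2}<\eps/4$. Next I pick $\delta$ so small that $2^{2m}\delta<\eps/2$. Feeding $(r,\delta)$ into Theorem \ref{qd2} then produces a complexity bound $d_0=d_0(r,\delta)$, and only now do I choose $c$ so small that $4^m c\,p^{d_0/4}<\eps/4$. With these choices, for any $c$-uniform $f$ the structure theorem furnishes a quadratic factor $(\bone,\btwo)$ of complexity $(d_1,d_2)$ with $d_1,d_2\leq d_0$ and rank at least $r(d_1+d_2)=2m(d_1+d_2)+C$.

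Writing $g:=f_2+f_3=f-f_1$, so that $\|g\|_\infty\leq 2$ (since $\|f\|_\infty,\|f_1\|_\infty\leq 1$), the expansion reads
\[\E_\x\prod_{i=1}^m f(L_i(\x))=\sum_{S\subseteq[m]}\E_\x\prod_{i\in S}g(L_i(\x))\prod_{i\notin S}f_1(L_i(\x)).\]
When $S=\emptyset$ one applies Lemma \ref{bound1} directly; the hypothesis $\|f\|_{U^2}\leq c$ and the rank bound give an estimate of $4^m c\,p^{d_1/4}+2^{m+1}p^{m(d_1+d_2)-r/2}<\eps/4+\eps/4$. When $S\ne\emptyset$, I single out some $i_0\in S$ and split $g(L_{i_0}(\x))=f_2(L_{i_0}(\x))+f_3(L_{i_0}(\x))$. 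The $f_2$-piece is bounded crudely by $2^{m-1}\|f_2\|_1\leq 2^{m-1}\delta$, using that the remaining factors have sup-norm at most $2$ and that $L_{i_0}(\x)$ is uniformly distributed on $\F_p^n$. The $f_3$-piece is bounded by $2^{m-1}\|f_3\|_{U^3}\leq 2^{m-1}\delta$ via Theorem \ref{gvn}, after rescaling the other factors so that they have sup-norm at most $1$. Summing over the $2^m-1$ nonempty $S$ contributes at most $2^{2m}\delta<\eps/2$, and the total is below $\eps$.

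The genuine analytic content lives in Lemma \ref{bound1}, which is where the square-independence of $\Lsys$ actually enters the picture; everything after the structure theorem is essentially bookkeeping. The only subtle point in the bookkeeping is that $r$ must be chosen before $\delta$, and therefore before the bound $d_0$ is known, so that the rank guaranteed by Theorem \ref{qd2} dominates the factor $p^{m(d_1+d_2)}$ appearing in the error term of Lemma \ref{bound1}; this is exactly why Theorem \ref{qd2} is stated with $r$ allowed to depend on $\delta$ and is the source of the tower-type quantitative bounds remarked upon immediately after its statement.
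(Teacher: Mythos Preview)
Your proposal is correct and follows essentially the same strategy as the paper's own proof. The only difference is cosmetic: the paper replaces $f$ by $f_1$ one bracket at a time via a telescoping argument (incurring a total error of $2m\delta$), whereas you expand $\prod_i(f_1+g)(L_i(\x))$ binomially and bound the $2^m-1$ mixed terms separately; both organizations lead to the same conclusion.
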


\begin{proof}
Let $\delta>0$ be a constant to be chosen later. Let $C$ be 
such that $2^{m+1}p^{-C/2}\leq\epsilon/3$ and let
$r$ be the function $d\mapsto 2md+C$. Then according to the 
structure theorem (Theorem \ref{qd2}) there exists $d_0$, depending
on $r$ and $\delta$ only, and a quadratic factor $(\bone,\btwo)$
of rank at least $2m(d_1+d_2)+C$ and complexity $(d_1,d_2)$, with
$d_1$ and $d_2$ both at most $d_0$, such that we can write
$f=f_1+f_2+f_3$, with $f_1=\E(f|\btwo)$, $\|f_2\|_2\leq\delta$
and $\|f_3\|_{U^3}\leq\delta$.

Let us show that the sum does not change much if we replace
$f(L_m(\x))$ by $f_1(L_m(\x))$. The difference is what we get
if we replace $f(L_m(\x))$ by $f_2(L_m(\x))+f_3(L_m(\x))$.
Now $\|f_2\|_1\leq\|f_2\|_2$ and $\|f\|_\infty\leq 1$, so 
the contribution from the $f_2$ part is at most $\delta$.
As for the $f_3$ part, since $\|f_3\|_{U^3}\leq\delta$
and $\|f\|_\infty\leq 1$, Theorem \ref{gvn} tells us that the
contribution is at most $\delta$. Therefore, the
total difference is at most $\delta+\delta\leq 2\delta$.

Now let us replace $f$ by $f_1$ in the penultimate bracket. The
same argument works, since $\|f_1\|_\infty\leq 1$. Indeed, we
can carry on with this process, replacing every single $f$ by
$f_1$, and the difference we make will be at most $2m\delta$.

We are left needing to show that the product with every $f$
replaced by $f_1$ is small. This is what Lemma \ref{bound1} tells us.
It gives us an upper bound of $4^mcp^{d_1/4}+2^{m+1}p^{m(d_1+d_2)-r/2}$,
where for $r$ we can take $2m(d_1+d_2)+C$. Therefore, the upper
bound is $4^mcp^{d_0/4}+2^{m+1}p^{-C/2}$, which, by our choice
of $C$, is at most $4^mcp^{d_0/4}+\epsilon/3$. 

To finish, let $\delta=\epsilon/6m$. This determines the value of
$d_0$ and we can then set $c$ to be $4^{-m}p^{-d_0/4}\epsilon/3$,
which will be a function of $\eps$ only.
\end{proof}

Because of our use of Theorem \ref{qd2}, the bounds in the above
result and in the corollary that we are about to draw from it are
both very weak. However, we have been explicit about all the
bounds apart from $d_0$, partly in order to make it clear how
the parameters depend on each other and partly to demonstrate
that our weak bound derives just from the weakness of $d_0$ in 
the structure theorem.

\begin{corollary}\label{bound3}
For every $\epsilon>0$ there exists $c>0$ 
with the following property. Let $A$ be a $c$-uniform subset
of $\F_p^n$ of density $\alpha$. Let $\Lsys=(L_1,\dots,L_m)$ be a
square-independent system of linear forms in $d$ variables, 
with Cauchy-Schwarz complexity at most 2. Let $\x=(x_1,\dots,x_d)$ 
be a random element of $(\F_p^n)^d$. Then the probability
that $L_i(\x)\in A$ for every $i$ differs from $\alpha^m$ by
at most $\epsilon$.
\end{corollary}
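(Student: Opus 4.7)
The plan is to deduce the corollary from Theorem \ref{bound2} by the standard trick of writing $A$ as $\alpha$ plus its balanced function and expanding. Let $f(x) = A(x) - \alpha$, so that $\|f\|_\infty \leq 1$ and $\|f\|_{U^2} \leq c$ by the definition of $c$-uniformity. The probability that $L_i(\x) \in A$ for every $i$ equals
\[\E_{\x \in (\F_p^n)^d} \prod_{i=1}^m A(L_i(\x)) = \E_\x \prod_{i=1}^m \bigl(\alpha + f(L_i(\x))\bigr),\]
which, upon expansion, becomes
\[\sum_{S \subseteq [m]} \alpha^{m-|S|}\, \E_\x \prod_{i \in S} f(L_i(\x)).\]

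The term with $S = \emptyset$ contributes exactly $\alpha^m$, so it remains to bound the remaining $2^m - 1$ terms. For each non-empty $S$, the subsystem $(L_i)_{i \in S}$ is still square-independent (any subsystem of a square-independent system is), and its Cauchy-Schwarz complexity is at most that of $\Lsys$, hence at most $2$. Therefore Theorem \ref{bound2} applies to this subsystem: given $\epsilon' > 0$, there is $c = c(\epsilon') > 0$ such that the corresponding expectation has absolute value at most $\epsilon'$ whenever $\|f\|_{U^2} \leq c$. (Strictly speaking we should take the minimum of the finitely many constants produced by applying Theorem \ref{bound2} to each of the $2^m - 1$ subsystems, but since $m$ depends only on $\Lsys$ this is harmless.)

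Choosing $\epsilon' = \epsilon / 2^m$ and letting $c$ be the resulting constant, we obtain
\[\Bigl| \E_\x \prod_{i=1}^m A(L_i(\x)) - \alpha^m \Bigr| \leq \sum_{\emptyset \ne S \subseteq [m]} \alpha^{m-|S|}\, \epsilon' \leq 2^m \cdot \epsilon' = \epsilon,\]
which is the required estimate. There is no real obstacle here; the entire content of the corollary is already in Theorem \ref{bound2}, and this deduction is a routine expansion argument, with the only mild subtlety being the verification that each subsystem inherits both square-independence and CS-complexity at most $2$.
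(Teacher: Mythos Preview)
Your proof is correct and follows essentially the same approach as the paper: write $A=\alpha+f$, expand the product into $2^m$ terms, keep $\alpha^m$ as the main term, and bound every other term by applying Theorem \ref{bound2} to the relevant subsystem with $\epsilon$ replaced by $\epsilon/2^m$. Your explicit remark about taking the minimum of the finitely many constants over all subsystems is a nice clarification that the paper leaves implicit.
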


\begin{proof}
We shall choose as our $c$ the $c$ that is given by the previous
theorem when $\epsilon$ is replaced by $\epsilon/2^m$.
Our assumption is then that we can write $A=\alpha+f$ for a 
$c$-uniform function $f$. The probability we are interested 
in is 
\[\E_{\x\in(\F_p^n)^d}\prod_{i=1}^m A(L_i(\x)),\]
which we can split into $2^m$ parts, obtained
by replacing each occurrence of $A$ either by $\alpha$
or by $f$. 

For each part that involves at least one occurrence of $f$, we have a
power of $\alpha$ multiplied by a product over some subsystem of
$\Lsys$. This subsystem will also be square-independent and have
CS-complexity at most 2. Moreover, the number of linear forms will
have decreased. Therefore, the previous theorem and our choice of $c$
tell us that the contribution it makes is at most
$\epsilon/2^m$. Therefore, the contribution from all such parts is at
most $\epsilon$. The only remaining part is the one where every
$A(L_i(\x))$ has been replaced by $\alpha$, and that gives us the main
term $\alpha^m$.
\end{proof}

\begin{section}{Concluding remarks}

First, we remark that Corollary \ref{bound3} allows us to deduce
rather straightforwardly a Szemer\'edi-type theorem for
square-independent systems of CS-complexity 2 which have the
additional property that they are \emph{translation-invariant}. That
is, one can show that any sufficiently dense subset of $\F_p^n$
contains a configuration of the given type.

Without the result of the preceding section, establishing that any
sufficiently dense subset contains a solution to systems of this type
would require a quadratic argument of the form used by Green and Tao
to prove Szemer\'edi's Theorem for progressions of length $4$ in
finite fields \cite{greentao:r4boundsI}. This would involve obtaining
density increases on quadratic subvarieties of $\F_p^n$, which then
need to be linearized in a carefully controlled manner. Although it is
certainly possible to adapt their argument in this way, for purely
qualitative purposes it is much simpler to use the result that
configurations of this type are governed by the $U^2$-norm, which
allows one to produce a density increase on an affine subspace. The
resulting argument is almost identical to the well-known argument for
$3$-term progressions \cite{meshulam:3APs}.  Translation invariance is
needed because the subspace on which we find a density increment may
be an affine and not a strictly linear one. (It is not hard to show
that the result is false if the system is not translation invariant.)
Recall that two examples of square-independent translation invariant
systems of complexity 2 are the systems $(x, x+y, x+z, x+y+z,
x+y-z, x+z-y)$ and $(x,x+a,x+b,x+c,x+a+b,x+a+c,x+b+c)$.

The second of these examples shows that our main result implies the 
following useful ``Pythagorean theorem,''  which generalizes the much
more straightforward fact that if $a$ is a constant and $f$ averages
zero, then $\|a+f\|_{U^2}^4=a^4+\|f\|_{U^2}^4$.

\begin{theorem}\label{pythagoras}
For every $\epsilon>0$ there exists $c>0$ such that if $f$ is a
$c$-uniform function from $\F_p^n$ to $[-1,1]$, $a\in[-1,1]$
is a constant, and $g(x)=a+f(x)$ for every $x\in\F_p^n$, then 
$\bigl|\|g\|_{U^3}^8-(a^8+\|f\|_{U^3}^8)\bigr|\leq\epsilon$.
\end{theorem}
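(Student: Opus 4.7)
The plan is to expand $\|g\|_{U^3}^8$ as a polynomial in $a$, identify the two ``diagonal'' terms $a^8$ and $\|f\|_{U^3}^8$, and bound each of the $2^8-2$ cross-terms by invoking Theorem \ref{bound2}. Since $g$ is real-valued the conjugates in the $U^3$ formula disappear, and we can write
\[
\|g\|_{U^3}^8 = \E_\x \prod_{\omega \in \{0,1\}^3} g(L_\omega(\x)),
\]
where $\x = (x, h_1, h_2, h_3) \in (\F_p^n)^4$ and $L_\omega(\x) := x + \omega \cdot \h$. Substituting $g = a+f$ and expanding gives
\[
\|g\|_{U^3}^8 = \sum_{S \subseteq \{0,1\}^3} a^{8-|S|}\,T_S, \qquad T_S := \E_\x \prod_{\omega \in S} f(L_\omega(\x)).
\]
The ``pure'' contributions $T_\emptyset = 1$ and $T_{\{0,1\}^3} = \|f\|_{U^3}^8$ combine to give exactly $a^8 + \|f\|_{U^3}^8$, so the whole proof reduces to showing that every remaining $T_S$ is small.

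For each $S$ with $1 \leq |S| \leq 7$ the plan is to apply Theorem \ref{bound2} to the subsystem $\Lsys_S := (L_\omega)_{\omega \in S}$, which requires two checks. First, CS-complexity: the full cube system has Cauchy-Schwarz complexity exactly $2$, witnessed at a vertex $\omega_0$ by partitioning the other seven vertices according to the first coordinate in which they differ from $\omega_0$, and one checks directly that $L_{\omega_0}$ lies outside the linear span of each of the three resulting classes. Restricting this partition to any subsystem preserves the non-spanning property, so every $\Lsys_S$ has CS-complexity at most $2$. Second, square-independence: since any linear dependence among the squares of a subsystem is also a dependence among those of any larger system, it suffices to treat $|S| = 7$, and by the symmetry of the cube (realized by the affine change of variables $\omega \mapsto \omega \oplus \omega_0$) we may take $S = \{0,1\}^3 \setminus \{000\}$. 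Expanding $\sum_\omega c_\omega L_\omega^T L_\omega$ and extracting the coefficients of the monomials $x^T x$, $x^T h_i$, $h_i^T h_i$ and $h_i^T h_j$ yields, for odd $p$, exactly the seven constraints $\sum_\omega \chi_T(\omega) c_\omega = 0$ with $T \subseteq \{1,2,3\}$ of size at most $2$, where $\chi_T(\omega) := \prod_{i \in T} \omega_i$. Mobius inversion on the Boolean lattice then forces $c_\omega = (-1)^{3-|\omega|} c_{111}$ for every $\omega$; combined with the requirement $c_{000} = 0$ this gives $c_{111} = 0$, and hence all $c_\omega = 0$.

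With both checks in hand, Theorem \ref{bound2} produces, for any $\epsilon' > 0$, a single constant $c' = c'(\epsilon') > 0$ such that every $c'$-uniform $f$ satisfies $|T_S| \leq \epsilon'$ for all $254$ relevant subsystems simultaneously (they are finite in number and uniformly bounded in size). Taking $\epsilon' = \epsilon/2^8$ and using $|a| \leq 1$ to bound $|a|^{8-|S|} \leq 1$ gives
\[
\Bigl|\|g\|_{U^3}^8 - \bigl(a^8 + \|f\|_{U^3}^8\bigr)\Bigr| \leq 2^8\,\epsilon' = \epsilon,
\]
as required. The main obstacle is the square-independence check for $|S|=7$: although it is ultimately a finite linear-algebra calculation, it is the whole reason Theorem \ref{bound2} can be invoked term-by-term, and without it the decomposition above would have nothing to stand on.
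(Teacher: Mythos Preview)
Your proof is correct and follows essentially the same route as the paper's: expand $\|a+f\|_{U^3}^8$ into $2^8$ terms, keep the two diagonal ones, and kill each cross-term by applying Theorem~\ref{bound2} to the corresponding proper subsystem of the $3$-cube. The paper only sketches this, asserting without detail that the cross-configurations are square-independent; your explicit verification (reducing to the cube-minus-a-vertex by symmetry and then showing that the unique square-dependence on the full cube has all coefficients non-zero) is exactly the missing step, and the paper in fact flags the system $(x,x+a,x+b,x+c,x+a+b,x+a+c,x+b+c)$ earlier as one of its motivating square-independent examples of CS-complexity~$2$.
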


We briefly sketch the proof: expanding out the definition of
$\|a+f\|_{U^3}^8$ one obtains a sum of $2^8$ terms, one of which
gives $a^8$ (if you choose $a$ from every bracket) and one of
which gives $\|f\|_{U^3}^8$ (if you choose $f$ from every bracket).
All the remaining terms are constant multiples of expectations
of $f$ over linear configurations that are square-independent
and therefore, by our main result, small.
\medskip

There are several ways in which the results of Section 3 might be
generalized. An obvious one is to prove comparable results for the
group $\Z_N$. As we mentioned earlier, we have a different proof
for $\F_p^n$ and this can be transferred to $\Z_N$ by ``semi-standard''
methods. (That is, the general approach is clear, but the details 
can be complicated and sometimes require more than merely technical
thought.) The alternative proof for $\F_p^n$ gives a doubly exponential
bound for the main result rather than the tower-type bound obtained
here.

Possibly even more obvious is to try to extend the main result of this
paper to a proof of Conjecture \ref{mainconj}. This involves a
generalization in two directions: to systems of CS-complexity greater
than 2, and to systems with true complexity greater than 1.  All
further cases will require polynomial Fourier analysis for a degree
that is greater than 2: the simplest is likely to be to show that a
square-independent system with CS-complexity 3 has true complexity
1. In this case, we would use a decomposition into a structured part
(a projection onto a cubic factor) and a uniform part (which would be
small in $U^4$ and therefore negligible) and then, as before,
concentrate on the structured part. Square-independence (which implies
cube-indepence) would ensure that we could reduce to the linear part
of the factor as before.

This state of affairs leaves us very confident that Conjecture
\ref{mainconj} is true. Although cubic and higher-degree Fourier
analysis have not yet been worked out, they do at least exist in local
form for $\Z_N$: they were developed in \cite{gowers:SzT} to prove the
general case of Szemer\'edi's theorem. It is therefore almost certain
that global forms will eventually become available, both for $\Z_N$
and for $\F_p^n$. And then, given a
statement analogous to Theorem \ref{qd2}, it is easy to see how to
generalize the main steps of our proof. In particular, the Gauss-sum
estimates on which we depend so heavily have higher-degree
generalizations.

A completely different direction in which one might consider 
generalizing the above results is to hypergraphs. For example, 
very similar proofs to those of Theorems \ref{vn} and \ref{gvn} 
can be used to prove so-called ``counting lemmas'' for quasirandom
hypergraphs---lemmas that assume that a certain norm is small and
deduce that the hypergraph contains approximately the expected
number of small configurations of a given kind.

One can now ask whether, as with sets, weaker quasirandomness
assumptions about a hypergraph suffice to guarantee the right number
of certain configurations, and if so, which ones. It turns out to be
possible to give a complete answer to a fairly natural formulation of
this question. Unfortunately, however, the proof is rather too easy to
be interesting, so here we content ourselves with somewhat informal
statements of results concerning the special case of 3-uniform
hypergraphs.  The proofs we leave as exercises for any reader who
might be interested.

Recall that if $X$, $Y$ and $Z$ are finite sets and 
$f:X\times Y\times Z\rightarrow\R$, then the \emph{octahedral norm}
of $f$ is the eighth root of
\[\E_{x(0),x(1)\in X}\E_{y(0),y(1)\in Y}\E_{z(0),z(1)\in Z}
\prod_{\epsilon\in\{0,1\}^3}f(x(\epsilon_1),y(\epsilon_2),z(\epsilon_3)).\]
It is easy to verify that if $X=Y=Z=G$ for some Abelian group $G$
and $f(x,y,z)=g(x+y+z)$ for some function $g$, then the octahedral
norm of $f$ is the same as the $U^3$-norm of $g$. Therefore, it is
natural to consider the octahedral norm of functions defined on 
$X\times Y\times Z$ as the correct analogue of the $U^3$-norm of
functions defined on Abelian groups.

An important fact about the octahedral norm is that $f$ has small
octahedral norm if and only if it has a small correlation with any
function of the form $u(x,y)v(y,z)w(x,z)$. Another important fact, the
so-called ``counting lemma'' for quasirandom hypergraphs, states the
following. Let $X$ be a finite set and let $H$ be a 3-uniform
hypergraph with vertex set $X$ and density $\alpha$. Suppose that $H$
is quasirandom in the sense that the function $H(x,y,z)-\alpha$ has
small octahedral norm (where $H(x,y,z)=1$ if $\{x,y,z\}\in H$ and 0
otherwise). Then $H$ has about the expected number of copies of any
fixed small hypergraph. For instance, if you choose $x$, $y$, $z$ and
$w$ randomly from $X$, then the probability that all of
$\{x,y,z\},\{x,y,w\},\{x,z,w\}$ and $\{y,z,w\}$ belong to $H$ is
approximately $\alpha^4$.

Now let us suppose that $g$ is uniform but not necessarily quadratically
uniform, and that we again define $f(x,y,z)$ to be $g(x+y+z)$. What
can we say about $f$? It is not necessarily the case that $f$ has
small octahedral norm, or that it has low correlation with functions
of the form $u(x,y)v(y,z)w(x,z)$. However, it is not hard to show that
it has low correlation with any function of the form $a(x)b(y)c(z)$,
a property that was referred to as \emph{vertex uniformity} in 
\cite{gowers:HRL3}. 

One might therefore ask whether vertex uniformity was sufficient to
guarantee the right number of copies of some small hypergraphs. 
However, well-known and easy examples shows that it does so only for 
hypergraphs such that no pair $\{x,y\}$ is contained in more than 
one hyperedge. For instance, let $u$ be a random symmetric function 
from $X^2$ to $\{-1,1\}$ and let $H(x,y,z)=(3+u(x,y)+u(y,z)+u(x,z))/6$.
Then $H$ is vertex uniform and has density $1/2$, but it is a simple 
exercise to show that $\E_{x,y,z,w}H(x,y,z)H(x,y,w)$ is about $5/18$ 
instead of the expected $1/4$.

However, this is perhaps not the right question to be asking. If $g$
is uniform, then $f$ has a stronger property than just vertex
uniformity: one can prove that it does not correlate with any function
of the form $u(x,y)w(x,z)$, $u(x,y)v(y,z)$ or $v(y,z)w(x,z)$. If we
take \emph{this} as our definition of ``weak quasirandomness'' for
functions (and call the hypergraph $H$ weakly quasirandom if the
function $H-\alpha$ is), then which hypergraphs appear with the right
frequency (or with ``frequency zero'' if we are talking about
functions rather than sets)? The answer turns out to be that a sum
over copies of a small hypergraph $H'$ will have the ``right'' value
if and only if there is a pair $\{x,y\}$ that belongs to exactly one
hyperedge $\{x,y,z\}$ of $H'$. The proof in the ``if'' direction
is an easy exercise. In particular, it does not involve any
interesting results about decomposing hypergraphs, which suggests that
the main result of this paper is, in a certain sense, truly
arithmetical.

As for the ``only if'' direction, here is a quick indication of how to
produce an example (in the complex case, for simplicity). Suppose that
no pair $\{x,y\}$ belongs to more than $m$ hyperedges in $H'$. For
each $k$ between 2 and $m$ let $f_k:X^2\rightarrow\C$ be a function 
whose values are randomly chosen $k$th roots of unity. Then let
$f(x,y,z)$ be the sum of all functions of the form $u(x,y)v(y,z)w(x,z)$,
where each of $u$, $v$ and $w$ is some $f_k$ with $2\leq k\leq m$.
When one expands out the relevant sum for this function $f$, one
finds that most terms cancel, but there will be some that don't
and they will all make a positive contribution. To find such a
term, the rough idea is to choose for each face $F$ of $H'$ a triple 
of functions $(f_{k_1},f_{k_2},f_{k_3})$, where $k_1$, $k_2$ and
$k_3$ are the number of faces of $H'$ that include each of the
three edges that make up the face $F$. For this term, each time
a $k$th root of unity appears in the product, it is raised to the
power $k$, so the term is large.

Finally, let us say just a little bit more about the result of Leibman
mentioned at the beginning of the paper. The question in ergodic
theory which is analogous to the one we were studying in Section 3
concerns so-called ``characteristic factors'' for ergodic averages of
the form
\[\lim_{N \tends \infty} \frac{1}{N^d} \sum_{n_1, ..., n_d=1}^{N} \int
T^{L_1(\n)} f_1 (x) \;T^{L_2(\n)} f_2 (x)\; \dots \;T^{L_m(\n)} f_m
(x) d\mu(x),\] 
where $T$ is a measure-preserving map on a probability measure space
$(X, \B,\mu)$ and the functions $f_i$ belong to $L^{\infty}(\mu)$. A
characteristic factor is a system onto which one can project without
losing any quantitative information about the average under
consideration. The aim is to find characteristic factors which possess
enough structure to allow one to establish convergence of the above
average in a rather explicit way. For example, it was shown by Host
and Kra \cite{hostkra:annals} and Ziegler \cite{ziegler:universalfactors} 
independently that when the linear forms $L_1, ..., L_m$ describe an arithmetic
progression of length $m$, then there exists a characteristic factor
for the average which is isomorphic to an inverse limit of a
sequence of $(m-2)$-step nilsystems. For $m=4$, these very structured
objects are closely related to the quadratic factor we are using in
this paper, on which computations can be performed rather
straightforwardly. After these remarks it should not be surprising
that there is a notion of degree associated with a characteristic
factor. What we have called the true complexity of a linear system is
closely analogous to the degree of the minimal characteristic factor.

Leibman \cite{leibman:new} characterizes the degree of the minimal
characteristic factor for general linear as well as certain polynomial
systems. As his definition of complexity in the ergodic context is
highly technical, we shall simply illustrate the analogy with our
result by quoting two examples from Section 6 of his paper: In our
terminology, both of the systems given by $(x+n+m, x+2n+4m, x+3n+9m,
x+4n+16m, x+5n+25m, x+6n+36m)$ and the ever so slightly different
$(x+n+m, x+2n+4m, x+3n+9m, x+4n+16m, x+5n+25m, x+6n+37m)$ have
CS-complexity 2. However, the second one has true complexity $1$ since
its squares are independent, or, as Leibman puts it, because the six
vectors $(1,1,1,1,1,1)$, $(1,c_1, c_2, \dots, c_5)$, $(1,d_1, d_2, \dots,
d_5)$, $(1,c_1^2, c_2^2, \dots, c_5^2)$, $(1,d_1^2, d_2^2, \dots, d_5^2)$ and
$(1, c_1 d_1, c_2 d_2, \dots, c_5 d_5)$ span $\R^6$. (Here $c_i, d_i$ are
the cofficients of $n,m$, respectively, in the linear form $i+1$. Note
that the special form of the ergodic average forces one to consider
translation-invariant systems only, which leads to a formulation of
square-independence that is particular to systems where one variable
has coefficient 1 in all linear forms.) 

In his proof of Szemer\'edi's Theorem, Furstenberg
\cite{furstenberg:SzT} developed an important tool known as the
\emph{correspondence principle} which allowed him to deduce
Szemer\'edi's combinatorial statement from the recurrence properties
of a certain dynamical system. Our result in the $\Z_N$ case does not
appear to follow from Leibman's result by a standard application of
the correspondence principle.  For an excellent introduction to
ergodic theory and its connections with additive combinatorics, we
refer the interested reader to \cite{kra:montreal}.

\end{section}

\end{document}